\newtheorem{theorem}{Theorem}[section]
\newtheorem{lemma}[theorem]{Lemma}
\newtheorem{theirtheorem}{Theorem}
\newcommand{\Z}{\mathbb Z}
\newcommand{\R}{\mathbb R}
\DeclareMathOperator{\supp}{Supp}
\newcommand{\la}{\langle}
\newcommand{\ra}{\rangle}
\newcommand{\be}{\begin{equation}}
\newcommand{\ee}{\end{equation}}
\newcommand{\und}{\;\mbox{ and }\;}
\newcommand{\nn}{\nonumber}
\newcommand{\ber}{\begin{eqnarray}}
\newcommand{\eer}{\end{eqnarray}}
\newcommand{\Sum}[2]{\underset{#1}{\overset{#2}{\sum}}}
\newcommand{\Summ}[1]{\underset{#1}{\sum}}
\newcommand{\A}{\mathscr A}
\newcommand{\sP}{\mathscr S}
\newcommand{\Fc}{\mathcal F}
\newcommand{\vp}{\mathsf v}
\newcommand{\h}{\mathsf h}
\newcommand{\sA}{\mathscr{A}}
\DeclareSymbolFont{goo}{OMS}{cmsy}{b}{n}
\DeclareMathSymbol{\gooT}{\mathalpha}{goo}{"1}
\newcommand{\bdot}{\mathbin{\gooT}}
\begin{document}

\title{Iterated Sumsets and  Setpartitions}
\author{David J. Grynkiewicz}
\email{diambri@hotmail.com}
\address{Department of Mathematical Sciences, University of Memphis, Memphis,  TN 38152, USA}
\subjclass[2010]{11B75}
\keywords{zero-sum, sumset, subsequence sum, subsum, Partition Theorem, DeVos-Goddyn-Mohar Theorem, Kneser's Theorem}

\begin{abstract}
Let $G\cong \Z/m_1\Z\times\ldots\times \Z/m_r\Z$ be a finite abelian group with $m_1\mid\ldots\mid m_r=\exp(G)$. The $n$-term subsums version of Kneser's Theorem,  obtained either via the DeVos-Goddyn-Mohar Theorem or the Partition Theorem, has
become a powerful tool used to prove numerous zero-sum and subsequence sum questions. It provides a structural description of sequences having a small number of $n$-term subsequence sums, ensuring this is only possible if most terms of the sequence are contained in a small number of $H$-cosets. For large $n\geq \frac1p|G|-1$ or $n\geq \frac1p|G|+p-3$, where $p$ is the smallest prime divisor of $|G|$, the structural description is particularly strong.  In particular, most terms of the sequence become contained in a single $H$-coset, with additional properties holding regarding the representation of elements of $G$ as subsequence sums. This strengthened form of the subsums version of Kneser's Theorem  was later to shown to hold under the weaker hypothesis $n\geq \mathsf d^*(G)$, where $\mathsf d^*(G)=\Sum{i=1}{r}(m_i-1)$. In this paper, we reduce the restriction on $n$ even further to an optimal, best-possible value, showing we need only assume $n\geq \exp(G)+1$ to obtain the same conclusions, with the bound further  improved for several classes of near-cyclic groups.\end{abstract}

\maketitle

\section{Notation and Overview}

Let $G$ be an abelian group and let $A,\,B\subseteq G$ be finite and nonempty subsets. Their sumset is defined as $A+B=\{a+b:\;a\in A,\,b\in B\}$. For $x\in G$, we let $\mathsf r_{A+B}(x)=|(x-B)\cap A|=|(x-A)\cap B)|$ denote the number of ways to represent $x=a+b$ as an element in the sumset $A+B$, where $(a,b)\in A\times B$. When $\mathsf r_{A+B}(x)=1$, we say that $x$ is a \emph{unique expression} element in $A+B$. Note $A+B=\{x\in G:\;\mathsf r_{A+B}(x)\geq 1\}$. Multiple summand sumsets are defined analogously: $\Sum{i=1}{n}A_i=\{\Sum{i=1}{n}a_i:\;a_i\in A_i\}$ for subsets $A_1,\ldots,A_n\subseteq G$. For an integer $n\geq 0$, we use the abbreviation $nA={\underbrace{A+\ldots+A}}_{n}$, where $0A:=\{0\}$, for the $n$-fold iterated sumset.

The \emph{stabilizer} of $A\subseteq G$ is the subgroup $\mathsf H(A)=\{x\in G:\; x+A=A\}\leq G$. It is the maximal subgroup $H$ such that $A$ is a union of $H$-cosets. When $\mathsf H(A)$ is trivial,  $A$ is called \emph{aperiodic}, and when $\mathsf H(A)$ is nontrivial,  $A$ is called \emph{periodic}. More generally, if $A$ is a union of $H$-cosets for some subgroup $H\leq G$ (necessarily with $H\leq \mathsf H(A)$), then $A$ is called \emph{$H$-periodic}.

If $H\leq G$ is a subgroup, then we let $\phi_H:G\rightarrow G/H$ denote the natural homomorphism. Note, if $H=\mathsf H(A)$, then $\phi_H(A)$ is aperiodic. We use $H<G$ to indicate that $H$ is proper, and $$\la A\ra_*:=\la A-A\ra=\la -x+A\ra\quad\mbox{ for any $x\in A$}$$ denotes the subgroup generated affinely by $A$, which is the smallest subgroup $H$ such that $A$ is contained in an $H$-coset.

Regarding sequences and subsequence sums, we follow the standardized notation from Factorization Theory  \cite{gao-ger-survey} \cite{alfredbook} \cite{Gbook}.  The key parts are summarized  here. Let $G_0\subseteq G$ be a subset. A \emph{sequence} $S$ of terms from $G_0$ is viewed formally as an element of the free abelian monoid  with basis $G_0$, denoted  $\mathcal F(G_0)$. Thus a sequence $S\in \Fc(G_0)$ is written as a finite multiplicative string of terms, using the bold dot operation $\bdot$ to concatenate terms, and with the order irrelevant:
$$S=g_1\bdot\ldots\bdot g_\ell$$ with $g_i\in G_0$ the terms of  $S$ and $|S|:=\ell\geq 0$ the \emph{length} of $S$.
 Given $g\in G_0$ and $s\geq 0$,  we let $g^{[s]}={\underbrace{g\bdot\ldots\bdot g}}_s$ denote the sequence consisting of the element $g$ repeated $s$ times.  We let
$$\vp_g(S)=|\{i\in [1,\ell]:\;g_i=g\}|\geq 0$$
denote the multiplicity of the term $g\in G_0$ in the sequence $S$.
If $S,\,T\in \Fc(G_0)$ are sequences, then  $S\bdot T\in \Fc(G_0)$ is  the sequence obtained by concatenating the terms of $T$ after those of $S$.
  A sequence $S$ may also be defined by listing its terms as a product: $S=\prod^\bullet_{g\in G_0}g^{[\vp_g(S)]}.$
 We use $T\mid S$ to indicate that $T$ is a subsequence of $S$ and let ${T}^{[-1]}\bdot S$ or $S\bdot {T}^{[-1]}$ denote the sequence obtained by  removing the terms of $T$ from $S$.
Then
\begin{align*}
 &\h (S) =     \max \{ \mathsf v_g(S):\; g \in G_0 \} \quad  \mbox{is the \emph{maximum  multiplicity} of $S$},\\
 &\supp(S)=\{g\in G_0:\; \vp_g(S)>0\}\subseteq G \quad\mbox{ is the \emph{support} of $S$},\\
 &\sigma(S)=\Sum{i=1}{\ell}g_i=\Summ{g\in G_0}\vp_g(S)g\in G\quad\mbox{ is the \emph{sum} of $S$},\\
 &\Sigma_n(S)=\{\sigma(T):\; T\mid S, \, |T|=n\}\subseteq G \quad \mbox{ are the $n$-term \emph{sub(sequence)-sums} of $S$},\\
 &\Sigma(S)=\{\sigma(T):\;T\mid S,\,|T|\geq 1\}\subseteq G\quad\mbox{ are the \emph{sub(sequence)-sums} of $S$}.
\end{align*}
Given a map
$\varphi \colon G_0 \to G'_0$, we let $\varphi(S)=\varphi(g_1)\bdot\ldots\bdot \varphi(g_\ell)\in \Fc(G'_0)$. The sequence $S$ is called \emph{zero-sum} if $\sigma(S)=0$. A \emph{setpartition} $\sA=A_1\bdot\ldots\bdot A_n$ over $G_0$ is a sequence of \emph{finite}, \emph{nonempty} subsets $A_i\subseteq G_0$. A setpartition naturally partitions its underlying sequence $$\mathsf{S}(\mathscr A):={\prod}^\bullet_{i\in [1,n]}{\prod}^\bullet_{g\in A_i}g\in \Fc(G_0)$$ into $n$ sets, so $\mathsf S(\sA)$ is the sequence obtained by concatenating the elements from every  $A_i$. We let $\sP(G_0)$ denote the set of all setpartitions over $G_0$, and refer to a setpartition of length $|\A|=n$ as an \emph{$n$-setpartition}.

Intervals are discrete, so $[a,b]=\{x\in \Z:\; a\leq x\leq b\}$ for $a,\,b\in \R$, as are variables introduced with inequalities. For $m\geq 1$, we let $C_m\cong \Z/m\Z$ denote a cylic group of order $m$. If $G$ is finite, then $G\cong C_{m_1}\times\ldots\times C_{m_r}$ for some $m_1\mid \ldots\mid m_r$ with $m_r=\exp(G)$ the \emph{exponent} of $G$. The \emph{Davenport Constant}, denoted $\mathsf D(G)$, is the least integer such  that a sequence of terms from $G$ with length $|S|\geq \mathsf D(G)$ must always contain a nontrivial zero-sum subsequence. In general, $\mathsf d^*(G)+1\leq \mathsf D(G)\leq |G|$, where $\mathsf d^*(G):=\Sum{i=1}{r}(m_i-1)$, though both inequalities may fail 
(see \cite[Propositions 5.1.4 and 5.1.8, pp. 341]{alfredbook}, or \cite{wolfgang-ordaz-olson-constant} for related results regarding the strong Davenport constant).

\bigskip

Subsequence sums and zero-sums  have been studied as an independent topic in Combinatorial Number Theory for many years and are now an important tool for those interested in Factorization Theory over Krull Domains and other monoids (see  \cite{gao-ger-survey} \cite{Alfred-Ruzsa-book} \cite{alfredbook} \cite{Gbook}). Their study often utilizes results from  Inverse Additive Number Theory, which seeks to characterize the structure of small cardinality sumsets. One of the key starting points here  is Kneser's classical theorem for  sumsets \cite[Theorem 4.1.1]{Alfred-Ruzsa-book} \cite[Theorem 5.2.6]{alfredbook} \cite[Theorem 6.1]{Gbook} \cite{kneserstheorem} \cite[Theorem 4.1]{natboook} \cite[Theorem 5.5]{taobook}.

\begin{theirtheorem}[Kneser's Theorem] Let $G$ be an abelian group and let $A_1,\ldots,A_n\subseteq G$ be finite, nonempty subsets. Then $$|\Sum{i=1}{n}A_i|\geq \Sum{i=1}{n}|A_i+H|-(n-1)|H|=\Sum{i=1}{n}|A_i|-(n-1)|H|+\rho,$$ where $H=\mathsf H(\Sum{i=1}{n}A_i)$ and $\rho:=\Sum{i=1}{n}|(A_i+H)\setminus A_i|$.
\end{theirtheorem}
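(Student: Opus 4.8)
The plan is to establish the stated inequality, since the displayed equality is purely formal: because $A_i\subseteq A_i+H$, we have $|A_i+H|=|A_i|+|(A_i+H)\setminus A_i|$, and summing over $i$ gives $\sum_{i=1}^{n}|A_i+H|=\sum_{i=1}^{n}|A_i|+\rho$; subtracting $(n-1)|H|$ from both sides yields the two right-hand expressions. Thus it remains to prove $|\sum_{i=1}^{n}A_i|\geq \sum_{i=1}^{n}|A_i+H|-(n-1)|H|$, where $H=\mathsf H(\sum_{i=1}^{n}A_i)$.

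Next I would normalize and reduce to the aperiodic case. The inequality is unchanged if each $A_i$ is translated, so nothing is lost in assuming $0\in A_i$ for all $i$. Writing $S=\sum_{i=1}^{n}A_i$, the set $S$ is a union of $H$-cosets, so passing to the quotient $\bar G=G/H$ via $\phi_H$ gives $\phi_H(S)=\sum_{i=1}^{n}\phi_H(A_i)$ with $\mathsf H(\phi_H(S))$ trivial, together with $|S|=|H|\cdot|\phi_H(S)|$ and $|A_i+H|=|H|\cdot|\phi_H(A_i)|$. Consequently the general inequality follows at once from its special case in which the total sumset is aperiodic, namely $|\sum_{i=1}^{n}C_i|\geq \sum_{i=1}^{n}|C_i|-(n-1)$ whenever $\mathsf H(\sum_{i=1}^{n}C_i)$ is trivial.

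I would prove this aperiodic inequality by induction on $n$, the case $n=1$ being trivial and the case $n=2$ being the heart of the matter. For $n=2$ one must show $|A+B|\geq |A|+|B|-1$ when $\mathsf H(A+B)$ is trivial, and here the natural tool is the Dyson $e$-transform: for $e\in G$ put $A'=A\cup(B+e)$ and $B'=(A-e)\cap B$. One checks directly that $A'+B'\subseteq A+B$ and that $|A'|+|B'|=|A|+|B|$, using the identity $|A\cup(B+e)|+|A\cap(B+e)|=|A|+|B|$ together with $|(A-e)\cap B|=|A\cap(B+e)|$. The idea is then to choose $e$ so that $A'$ strictly enlarges $A$ while $B'$ stays nonempty; since $|A'|+|B'|$ is preserved, each such step strictly shrinks the second summand, so after finitely many steps one reaches a singleton summand (where the bound is immediate) unless no enlarging $e\in A-B$ exists, which forces $A+B=A$ and hence $A+B$ periodic, contradicting aperiodicity. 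Organizing this as a minimal-counterexample argument, with $|A'+B'|\leq|A+B|$ transferring the resulting bound back to the original pair, completes the case.

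For the induction step $n\geq 3$ I would set $B=\sum_{i=2}^{n}A_i$, apply the two-summand theorem to $A_1+B$, and combine with the inductive estimate for $B$. Here lies the genuine obstacle and the true content of Kneser's theorem: the partial sum $B$ may be \emph{periodic}, with nontrivial stabilizer $K=\mathsf H(B)$, even though the total sum $A_1+B$ is aperiodic, and a naive combination then loses a full factor of $|K|$ and fails to close the induction. Resolving this requires Kneser's device of, upon encountering a periodic partial sum, replacing the relevant sets by their $K$-saturations and re-running the argument in $G/K$, together with a monotonicity argument showing the accumulated stabilizer can only grow and that the procedure terminates. I expect exactly this control of periodicity in partial sums, equivalently the convergence of the transform process in the two-summand step, to be the principal difficulty; by contrast the cardinality identities and the quotient reduction in the earlier steps are routine.
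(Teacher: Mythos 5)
The paper never proves this statement: it is Kneser's classical theorem, quoted as background (Theorem A) with pointers to the literature \cite{kneserstheorem}, \cite[Theorem 4.1.1]{Alfred-Ruzsa-book}, \cite[Theorem 6.1]{Gbook}, so your proposal has to stand on its own. Its routine parts are correct: the displayed equality is indeed formal bookkeeping, and passing to $G/H$ correctly reduces everything to the aperiodic inequality $|\sum_{i=1}^{n}C_i|\geq \sum_{i=1}^{n}|C_i|-(n-1)$. However, your diagnosis of where the difficulty sits is wrong. In the step $n\geq 3$ you worry that $B=\sum_{i=2}^{n}A_i$ could be periodic while $A_1+B$ is aperiodic. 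That cannot happen: if $K=\mathsf H(B)$, then $(A_1+B)+K=A_1+(B+K)=A_1+B$, so $K\leq \mathsf H(A_1+B)$. Aperiodicity of the total sumset therefore forces aperiodicity of every partial sumset, and the induction on $n$ is a one-line application of the two-summand case; no saturation device is needed there.

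The genuine gap is in the two-summand case, which carries the entire content of the theorem. The Dyson transform bookkeeping is right ($A'+B'\subseteq A+B$, $|A'|+|B'|=|A|+|B|$, $|B'|<|B|$, $B'\neq\emptyset$), but closing a minimal-counterexample argument requires applying the aperiodic statement to the new pair $(A',B')$, and its hypothesis can fail: $A'+B'$ is only a \emph{subset} of $A+B$, and a subset of an aperiodic set may well be periodic, in which case minimality of the counterexample tells you nothing. The same failure appears in your termination analysis: if the iteration stalls at a pair $(A_k,B_k)$ with $|B_k|\geq 2$, one only gets $B_k-B_k\subseteq \mathsf H(A_k)$, so that $A_k+B_k=A_k+b$ for $b\in B_k$ is periodic, and transferring back gives merely $|A+B|\geq |A_k+B_k|=|A|+|B|-|B_k|$, which is not enough; your claim that stalling ``forces $A+B=A$, hence $A+B$ periodic'' conflates the current pair with the original one, and is valid only at step zero. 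This periodic configuration is exactly why the transform argument proves Cauchy--Davenport in $\Z/p\Z$ (where no proper nontrivial subgroups exist) but does not prove Kneser's theorem in general: handling it requires proving the full statement with stabilizer and hole terms simultaneously, quotienting by the stabilizer that appears, and lifting the information back, which is the actual multi-page proof found in \cite{natboook} or \cite[Chapter 6]{Gbook}. Your proposal defers precisely this step (``Resolving this requires Kneser's device\ldots'') rather than carrying it out, so what you have is a correct reduction plus the Cauchy--Davenport mechanism, not a proof of the stated theorem.
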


Note $\Sum{i=1}{n}A_i=\Sum{i=1}{n}(A_i+H)$, and $\rho$ measures the number of ``holes'' in the sets $A_i$ relative to the sets $A_i+H$. Kneser's Theorem first appeared in the 1960s \cite{kneserstheorem}. It took much longer for the analogous result for $n$-term subsums to be developed, which is proved either as a special case of the DeVos-Goddyn-Mohar Theorem  or the Partition Theorem (see the discussion in \cite[pp. 181--182]{Gbook}).

\begin{theirtheorem}[Subsum Kneser's Theorem]\label{thm-subsum-kneser}
Let $G$ be an abelian group, let $n\geq 1$, let $S\in \Fc(G)$ be a sequence  with $\mathsf h(S)\leq n\leq |S|$, let $H=\mathsf H(\Sigma_n(S))$,   let $X\subseteq G/H$ be the subset of all $x\in G/H$ for which $x$ has multiplicity at least $n$ in $\phi_H(S)$, and let $e$ be the number of terms from $S$ not contained in $\phi_H^{-1}(X)$. Then \ber\nn |\Sigma_n(S)|&\geq& (|S|-n+1)-(n-e-1)(|H|-1)+\rho,\\ \nn&=& |S|-(n-1)|H|+e(|H|-1)+\rho,\eer where $\rho=|X||H|n+e-|S|\geq 0$.
\end{theirtheorem}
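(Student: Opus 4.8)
The plan is to derive the bound from Kneser's Theorem after first stripping away the stabilizer $H=\mathsf H(\Sigma_n(S))$. Two bookkeeping points come first. The two displayed right-hand sides are literally the same quantity: expanding $-(n-e-1)(|H|-1)$ and substituting the value of $\rho$ reduces both to $|H|\big(n(|X|-1)+1+e\big)$, so it is enough to prove this one clean inequality. Moreover $\rho\geq 0$ is just a restatement of the hypothesis $\mathsf h(S)\leq n$: since no element of $G$ occurs more than $n$ times in $S$, each of the $|X|$ cosets comprising $\phi_H^{-1}(X)$ holds at most $n|H|$ terms of $S$, and the remaining $e$ terms lie outside, giving $|S|\leq n|X||H|+e$.

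Next I would reduce to the aperiodic case by projecting modulo $H$. Because $\phi_H$ is a homomorphism, $\phi_H(\Sigma_n(S))=\Sigma_n(\phi_H(S))$, and as $H$ is the \emph{full} stabilizer of $\Sigma_n(S)$, this set is a union of $H$-cosets whose image is aperiodic in $G/H$; hence $|\Sigma_n(S)|=|H|\cdot|\Sigma_n(\phi_H(S))|$. Setting $\bar S=\phi_H(S)$, I then cap multiplicities, replacing $\bar S$ by the subsequence $\bar S'$ with $\mathsf v_y(\bar S')=\min\{n,\mathsf v_y(\bar S)\}$ for each $y\in G/H$. Capping at $n$ changes no $n$-term subsum, so $\Sigma_n(\bar S')=\Sigma_n(\bar S)$ remains aperiodic, while now $\mathsf h(\bar S')\leq n$ and $|\bar S'|=\sum_y\min\{n,\mathsf v_y(\bar S)\}=n|X|+e$. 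Thus everything reduces to the following aperiodic statement: if $\mathsf h(T)\leq n\leq|T|$ and $\Sigma_n(T)$ is aperiodic, then $|\Sigma_n(T)|\geq|T|-n+1$. Applying this to $T=\bar S'$ and multiplying by $|H|$ returns exactly the target $|H|\big(n(|X|-1)+1+e\big)$.

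For the aperiodic statement I would invoke the Partition Theorem: because $\mathsf h(T)\leq n\leq|T|$, the sequence $T$ can be realized as $\mathsf S(\sA)$ for a length-$n$ setpartition $\sA=A_1\bdot\ldots\bdot A_n$, and the Partition Theorem furnishes such an $\sA$ with $\mathsf H(\sum_{i=1}^n A_i)=\mathsf H(\Sigma_n(T))$. In the aperiodic case this common stabilizer is trivial, so Kneser's Theorem applied to the setpartition gives $|\Sigma_n(T)|\geq|\sum_{i=1}^n A_i|\geq\sum_{i=1}^n|A_i|-(n-1)=|T|-n+1$, using $\sum_i|A_i|=|\mathsf S(\sA)|=|T|$ together with $\sum_{i=1}^n A_i\subseteq\Sigma_n(T)$. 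The real obstacle is the existence of a \emph{full-support} setpartition whose sumset carries the same stabilizer as $\Sigma_n(T)$; this is precisely the DeVos--Goddyn--Mohar/Partition content, and I expect to prove it by induction on $|T|$ through the recursion $\Sigma_n(T)=\Sigma_n(T_0)\cup\big(g+\Sigma_{n-1}(T_0)\big)$, where $T=g\bdot T_0$, obtained by deciding whether an $n$-term subsum uses the distinguished term $g$. The delicate part of that induction is the stabilizer bookkeeping, since removing a term can introduce periodicity into the shorter subsum sets even when $\Sigma_n(T)$ is aperiodic, forcing a split into cases that are recombined via Kneser's Theorem applied to the union.
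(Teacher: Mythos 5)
Your proposal is correct and follows essentially the route the paper itself indicates: the paper gives no standalone proof of Theorem~\ref{thm-subsum-kneser}, but states that it is obtained as a consequence of the DeVos--Goddyn--Mohar Theorem or the Partition Theorem (Theorem~\ref{thm-partition-thm}), and your reduction --- project modulo $H=\mathsf H(\Sigma_n(S))$, cap multiplicities at $n$ so the capped sequence has length $|X|n+e$ and unchanged aperiodic $n$-term subsums, then apply the Partition Theorem/Kneser in the aperiodic setting --- is exactly that derivation with the details filled in. Your closing paragraph sketching an inductive proof of the Partition Theorem itself is unnecessary (the paper takes Theorem~\ref{thm-partition-thm} as a known prerequisite), and note that in its case~1 the bound $|\Sigma_n(T)|\geq |T|-n+1$ follows directly without any stabilizer identification, so the minor overstatement that the setpartition's sumset carries the same stabilizer as $\Sigma_n(T)$ is never actually needed.
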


The bound given in Theorem \ref{thm-subsum-kneser} is equal to $$((N-1)n+e+1)|H|=(\Summ{x\in G/H}\min\{n,\,\vp_x(\phi_H(S))\}-n+1)|H|,$$ where $N=|X|$, which is how the bound is stated in \cite{Gbook} and \cite{DGM}. The form given above is often more practical and highlights the connection with Kneser's Theorem better.  If we define $S^*$ to be the sequence obtained from $S$ (as given in Theorem \ref{thm-subsum-kneser}) by taking each term $x\in \phi_H^{-1}(X)$ and changing its multiplicity from $\vp_x(S)$ to $\vp_x(S^*)=n$, then $S\mid S^*$, $|S^*|=|S|+\rho$ and $\Sigma_n(S)=\Sigma_n(S^*)$ with $\rho$ measuring the number of ``holes'' in the sequence $S$ relative to $S^*$. The sequence $S^*$ plays the same role in Theorem \ref{thm-subsum-kneser} as the sets $A_i+H$ in the bound $|\Sum{i=1}{n}A_i|\geq \Sum{i=1}{n}|A_i+H|-(n-1)|H|$ obtained from Kneser's Theorem.
Theorem \ref{thm-subsum-kneser} can be obtained either from the DeVos-Goddyn-Mohar Theorem or the  Partition Theorem. The Partition Theorem first appeared (in some form) in \cite{ccd}, with the variation allowing $S'\mid S$ appearing in \cite{hamconj}. The more general form given below, which subtlety refines and strengthens the Subsum Kneser's Theorem, may be found in \cite[Theorem 14.1]{Gbook}, slightly reworded here.

\begin{theirtheorem}[Partition Theorem]\label{thm-partition-thm} Let $G$ be an abelian group, let $n\geq 1$, let $S\in \Fc(G)$ be a sequence, let $S'\mid S$ be a subsequence with $\mathsf h(S')\leq n\leq |S'|$, let $H=\mathsf H(\Sigma_n(S))$, let $X\subseteq G/H$ be the subset of all $x\in G/H$ for which $x$ has multiplicity at least $n$ in $\phi_H(S)$, and let $e$ be the number of terms from $S$ not contained in $\phi_H^{-1}(X)$.  Then there exists a setpartition $\sA=A_1\bdot\ldots\bdot A_n\in \sP(G)$ with $\mathsf S(\sA)\mid S$ and $|\mathsf S(\sA)|=|S'|$ such that
either
\begin{itemize}
\item[1.] $|\Sigma_n(S)|\geq |\Sum{i=1}{n}A_i|\geq \Sum{i=1}{n}|A_i|-n+1=|S'|-n+1$, or
\item[2.] $|\Sigma_n(S)|=|\Sum{i=1}{n}A_i|\geq \Sum{i=1}{n}|A_i+H|-(n-1)|H|=|S'|-(n-1)|H|+e(|H|-1)+\rho$, where  $\rho=|X||H|n+e-|S'|\geq 0$, while  $\supp(\mathsf S(\sA)^{[-1]}\bdot S)\subseteq \phi_H^{-1}(X)\subseteq A_i+H$ and $|A_i\setminus \phi_H^{-1}(X)|\leq 1$ for all $i\in [1,n]$.
\end{itemize}
\end{theirtheorem}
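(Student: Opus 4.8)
The plan is to realize $\Sigma_n(S)$ through an extremal setpartition and then feed its sumset into Kneser's Theorem. First I would record the elementary but crucial containment: for any $n$-setpartition $\sA=A_1\bdot\ldots\bdot A_n$ with $\mathsf S(\sA)\mid S$, selecting one term from each nonempty part produces an $n$-term subsequence of $S$, so $\sum_{i=1}^{n}A_i\subseteq \Sigma_n(S)$; indeed $\Sigma_n(S)=\bigcup_\sA \sum_{i=1}^{n}A_i$ as $\sA$ ranges over all $n$-setpartitions with underlying sequence dividing $S$ (using singleton parts to capture a prescribed $n$-term subsum). The hypotheses $\h(S')\le n\le |S'|$ guarantee, by a greedy/Hall distribution of the terms of $S'$ among $n$ nonempty parts with no repeats inside a part, that the family $\mathscr P$ of setpartitions $\sA$ with $\mathsf S(\sA)\mid S$ and $|\mathsf S(\sA)|=|S'|$ is nonempty. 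I would then fix $\sA\in\mathscr P$ maximizing $|\sum_{i=1}^{n}A_i|$, breaking ties by minimizing the number of holes $\sum_i|(A_i+H')\setminus A_i|$ and the number of terms outside full cosets, where $H'=\mathsf H(\sum_{i=1}^{n}A_i)$.

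Next, apply Kneser's Theorem to this sumset: $|\sum_{i=1}^{n}A_i|\ge\sum_{i=1}^{n}|A_i+H'|-(n-1)|H'|\ge |S'|-(n-1)|H'|+\rho'$, where $\rho'=\sum_i|(A_i+H')\setminus A_i|$ and I have used $\sum_i|A_i|=|\mathsf S(\sA)|=|S'|$. The argument now splits on $H'$. If $H'$ is trivial, the bound collapses to $|\sum_{i=1}^{n}A_i|\ge|S'|-n+1$, and since $\sum_{i=1}^{n}A_i\subseteq\Sigma_n(S)$, conclusion~1 holds for this $\sA$.

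The substantive case is $H'\neq\{0\}$, which should deliver conclusion~2. The key claim is that maximality forces $\sum_{i=1}^{n}A_i=\Sigma_n(S)$, whence $H'=\mathsf H(\Sigma_n(S))=H$. I would argue by contradiction: if some $g\in\Sigma_n(S)\setminus\sum_{i=1}^{n}A_i$ existed, realized as $g=\sigma(t_1\bdot\ldots\bdot t_n)$ with $t_1\bdot\ldots\bdot t_n\mid S$, then—because $\sum_{i=1}^{n}A_i$ is $H'$-periodic and $g+H'$ is an entire $H'$-coset disjoint from it—I would transfer terms among the parts (arranging $t_i\in A_i$ while preserving $\mathsf S(\sA)\mid S$, the length $|S'|$, the legitimacy of the setpartition, and every previously attained subsum) to obtain $\sA'\in\mathscr P$ with $\sum_{i=1}^{n}A_i'\supseteq(\sum_{i=1}^{n}A_i)\cup(g+H')$, contradicting maximality since $|H'|\ge2$. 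Granting this, Kneser's structural bound becomes $|\Sigma_n(S)|=\sum_{i=1}^{n}|A_i+H|-(n-1)|H|$, and the two displayed forms, together with $\rho=|X||H|n+e-|S'|\ge0$, follow from the bookkeeping identity $\sum_{i=1}^{n}|A_i+H|=(|X|n+e)|H|$ once the coset structure is established. That structure—$\supp(\mathsf S(\sA)^{[-1]}\bdot S)\subseteq\phi_H^{-1}(X)$, $\phi_H^{-1}(X)\subseteq A_i+H$, and $|A_i\setminus\phi_H^{-1}(X)|\le1$—I would extract from the secondary extremal choices via the same transfer mechanism: an unused term outside a full coset could be swapped in to enlarge the sumset or reduce holes; a part missing a full coset $x\in X$ (which, having multiplicity $\ge n$ in $\phi_H(S)$, supplies enough terms to reach every part) could be augmented; and a part carrying two elements outside the full cosets could be thinned—each contradicting the optimality of $\sA$.

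The main obstacle is precisely the transfer argument in the periodic case: rearranging the terms of $S$ among the $n$ parts so as to \emph{simultaneously} keep $\sA'$ a valid setpartition (no repeated element within a part) of prescribed length $|S'|$ with $\mathsf S(\sA')\mid S$, retain every previously attained subsum, and gain the new full coset $g+H'$. Controlling these competing constraints—especially when some terms have high multiplicity and the parts are already nearly saturated by $\phi_H^{-1}(X)$—is the technical heart, and it is where repeated applications of Kneser's Theorem to the modified sumsets and a careful accounting of holes become indispensable.
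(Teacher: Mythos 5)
Your framework---realize $\Sigma_n(S)$ through setpartitions of length $|S'|$, fix an extremal $\sA$, and feed $\Sum{i=1}{n}A_i$ into Kneser's Theorem---is the natural opening, and your handling of the aperiodic case (conclusion~1) and the bookkeeping identity $\Sum{i=1}{n}|A_i+H|=(|X|n+e)|H|$ granted the coset structure are both sound. Note, however, that the paper does not prove this statement at all: it is quoted from \cite[Theorem 14.1]{Gbook}, where the proof is a substantial induction (with Kneser's Theorem applied at each inductive step and careful tracking of how parts interact with cosets), not a one-shot extremal argument. So the relevant comparison is with that proof, and against it your proposal has a genuine gap, precisely at the step you yourself flag as the technical heart.

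The gap is the transfer argument. You claim that if $g\in\Sigma_n(S)\setminus\Sum{i=1}{n}A_i$ with $\Sum{i=1}{n}A_i$ periodic, then the terms of $S$ can be redistributed into a setpartition $\sA'\in\mathscr P$ whose sumset contains $\big(\Sum{i=1}{n}A_i\big)\cup(g+H')$, ``retaining every previously attained subsum.'' Nothing in the proposal supplies a mechanism for this, and it is not a routine exchange: placing the witnesses $t_1,\ldots,t_n$ of $g$ into distinct parts may force moving many terms simultaneously (several $t_i$ may lie in the same part, or a part may already contain a copy of $t_i$), and the rearranged sumset has no reason to contain the old one---$H'$-periodicity is a property of the sumset, not of the individual parts, and is not preserved under exchanges, so sums can be lost exactly as they are gained. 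Maximality of $|\Sum{i=1}{n}A_i|$ therefore yields no contradiction unless the loss is quantitatively dominated by the gained coset, and establishing that domination is essentially the theorem itself. Your secondary tie-breaking criteria are also problematic as stated: they are defined in terms of $H'=\mathsf H(\Sum{i=1}{n}A_i)$, which changes with $\sA$, so ``number of holes'' is not a monotone potential under your moves. Finally, every structural assertion of conclusion~2 ($\phi_H^{-1}(X)\subseteq A_i+H$, $|A_i\setminus\phi_H^{-1}(X)|\leq 1$, and $\supp(\mathsf S(\sA)^{[-1]}\bdot S)\subseteq\phi_H^{-1}(X)$) is extracted by invoking this same unproven mechanism again. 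In short, the proposal reduces the Partition Theorem to a claim equivalent in difficulty to the Partition Theorem; the inductive proof in \cite{Gbook} (or the derivation from the DeVos--Goddyn--Mohar Theorem) exists precisely because local exchange plus maximality does not suffice.
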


The Partition Theorem and Subsum Kneser's Theorem are important tools that have been used as a key ingredient in the proof of many results about subsums and zero-sums, including the results from \cite{arie-II} \cite{PropB} \cite{Gao-nsum-paper} \cite{Gao-bonus} \cite{GG-nonabelian-index2} \cite{nondescrea-diam-zerosum} \cite{hamconj} \cite{hypergraph-egz} \cite{wegz} \cite{number-zs} \cite{GrahamConj} \cite{rasheed} \cite{andy-paper} \cite{oscar1} \cite{oscar-gaothms}. These results often involve  generalizing prior results,  confirming or partially confirming conjectures, or deal with  questions partially tackled by other authors, including results from
\cite{bialostocki-I} \cite{Bialostocki-II} \cite{bialostock-nondecreasing} \cite{bollobas-leader} \cite{Cao} \cite{Caro-WEGZ-survey} \cite{graham-original} \cite{grahamconj} \cite{Furedi-kleitman-m-term-zs} \cite{Gao-preolson} \cite{Gao-conj-ordaz-paper} \cite{ham-ordaz} \cite{ham-subsumConj} \cite{kisin} \cite{Mann-preolson} \cite{olson1-pregao} \cite{pingzhui-zeng}. While the form given in Theorem \ref{thm-subsum-kneser} is in some cases  sufficient, the added refinements given in Theorem \ref{thm-partition-thm} can be helpful, particularly when it can be assumed that $|X|=1$. Indeed, early forms of the Partition Theorem were formulated to focus particularly on  sequences $S$ and values of $n$ for which $|X|=1$ could be guaranteed, in which case either $|\Sigma_n(S)|$ could be guaranteed to be large or most terms of $S$ shown to lie in a common $K$-coset for some subgroup $K\leq G$. In the latter case, letting $S_K\mid S$ denote the subsequence of terms contained in this $K$-coset, it is additionally useful to know that $\Sigma_k(S_K)$ achieves its maximal possible size, when it is an entire $K$-coset, ideally for small $k$ and with $\Sigma_k(S_K)=\Sigma_k(S'_K)$ for a small length subsequence $S'_K\mid S_K$, as this frees up the remaining terms of $S$ to be used for other means. Various such versions have been given for $n\geq \frac1p|G|-1$ or $n\geq \frac1p|G|+p-3$ or $n\geq \mathsf d^*(G)$, where $p$ is the smallest prime divisor of $|G|$. See  \cite{oscar-weighted} \cite{ccd} \cite{hamconj} \cite[Theorems 15.1 and 15.2]{Gbook}. For instance, one of the main results of \cite{oscar-weighted} is a weighted version of the following theorem.

\begin{theirtheorem}\label{thm-oscar-partition-cor}Let $G$ be a finite abelian group, let $n\geq 1$, let $S\in \Fc(G)$ be a sequence of terms from $G$, and let   $S'\mid S$ be a subsequence with $\mathsf h(S')\leq n\leq |S'|$. Suppose $n\geq \mathsf d^*(G)$. Then there is a setpartition $\mathscr A=A_1\bdot\ldots\bdot A_n\in \sP(G)$ with $\mathsf S(\mathscr A)\mid S$ and $|\mathsf S(\mathscr A)|=|S'|$ such that
 \begin{itemize}
 \item[(i)] $|\Sigma_n(S)|\geq |\Sum{i=1}{n}A_i|\geq \min\{|G|,\, |S'|-n+1\}$, or
 \item[(ii)] there exists a proper, nontrivial subgroup $K<G$ and $\alpha\in G$ such that the following hold:
\begin{itemize}
\item[(a)] $\supp(\mathsf S(\mathscr A)^{[-1]}\bdot S)\subseteq \alpha+K$,
\item[(b)]  $|\Sigma_n(S)|\geq (e_K+1)|H|$, where $e_K\leq |G/K|-2$ is the number of terms of $S$ lying outside the coset $\alpha+K$,
\item[(c)]  $(\alpha+K)\cap A_i\neq \emptyset$ for all $i\in [1,n]$, and $A_i\subseteq \alpha+K$ for all $i\in [1,\mathsf d^*(K)]$,
    \item[(d)] $\Sum{i=1}{\mathsf d^*(K)} A_i=\mathsf d^*(G)\alpha+K$.
\end{itemize}
  \end{itemize}
\end{theirtheorem}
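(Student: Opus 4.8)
The plan is to apply the Partition Theorem (Theorem~\ref{thm-partition-thm}) to $S$, $S'$ and $n$, producing a setpartition $\sA=A_1\bdot\ldots\bdot A_n$ with $\mathsf S(\sA)\mid S$ and $|\mathsf S(\sA)|=|S'|$, and then to translate its two alternatives into conclusions (i) and (ii). Alternative~1 gives $|\Sigma_n(S)|\geq |\Sum{i=1}{n}A_i|\geq |S'|-n+1$; since $\Sigma_n(S)\subseteq G$, and $\Sum{i=1}{n}A_i=G$ as soon as this bound would exceed $|G|$, this is exactly conclusion~(i). So I would assume alternative~2 holds, and record $H=\mathsf H(\Sigma_n(S))$, the set $X\subseteq G/H$ with $N=|X|$, the number $e$ of terms of $S$ outside $\phi_H^{-1}(X)$, the identity $\Sigma_n(S)=\Sum{i=1}{n}A_i$, and the Kneser-form bound $|\Sigma_n(S)|\geq ((N-1)n+e+1)|H|$.

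The organising device is the subgroup $K=\phi_H^{-1}(\la X-X\ra)\leq G$ together with a fixed $\alpha\in G$ satisfying $\phi_H(\alpha)\in X$, chosen so that $\phi_H^{-1}(X)\subseteq \alpha+K$. Two features of alternative~2 drive everything: each $A_i$ meets every one of the $N$ cosets making up $\phi_H^{-1}(X)$ (because $\phi_H^{-1}(X)\subseteq A_i+H$), and $\Sum{i=1}{n}A_i$ is $H$-periodic. If $K=G$, i.e.\ $X$ affinely generates $G/H$, then I would argue we land in~(i): modulo $H$ each $\phi_H(A_i)\supseteq X$, so $\Sum{i=1}{n}\phi_H(A_i)\supseteq nX$, and $nX=G/H$ once $n\geq \mathsf d^*(G/H)$ (a generating set's $\mathsf d^*$-fold sumset is the whole group); as $n\geq \mathsf d^*(G)\geq \mathsf d^*(G/H)$ and $\Sum{i=1}{n}A_i$ is $H$-periodic, this forces $\Sigma_n(S)=G$. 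Barring the trivial degenerate situation ($N=1$, $e=0$, $H$ trivial, where $\Sigma_n(S)$ is a single point and~(i) is immediate), a proper $K$ is also nontrivial, and I would use this $K$ and $\alpha$ for conclusion~(ii).

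For conclusion~(ii), part~(a) is immediate from alternative~2, since $\supp(\mathsf S(\sA)^{[-1]}\bdot S)\subseteq \phi_H^{-1}(X)\subseteq \alpha+K$, and the first clause of~(c) holds because each $A_i$ meets $\phi_H^{-1}(X)\subseteq \alpha+K$. The terms of $S$ outside $\alpha+K$ are among those outside $\phi_H^{-1}(X)$, so $e_K\leq e$, whence part~(b) follows directly from the Kneser-form bound, $|\Sigma_n(S)|\geq ((N-1)n+e+1)|H|\geq (e_K+1)|H|$, while a short Cauchy--Davenport estimate in the quotient $G/K$ shows $e_K\leq |G/K|-2$ unless $\Sigma_n(S)$ already meets enough $K$-cosets to force~(i). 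The crux is the second clause of~(c) together with~(d): after reordering the $A_i$ so that those lying inside $\alpha+K$ come first, I must exhibit at least $\mathsf d^*(K)$ of them whose total excess (the sum of the $|A_i|-1$) is at least $\mathsf d^*(K)$ and which affinely generate $K$, and then invoke the standard iterated-sumset lemma --- a generating family of subsets of a $K$-coset with excess at least $\mathsf d^*(K)$ has full-coset sumset --- to obtain $\Sum{i=1}{\mathsf d^*(K)}A_i=\mathsf d^*(G)\alpha+K$.

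The main obstacle is exactly this final bookkeeping. Affine generation of $K$ is the easy half: the retained sets meet all $N$ cosets of $\phi_H^{-1}(X)$, hence generate $\la X-X\ra=K/H$ modulo $H$, while the $H$-periodicity of $\Sum{i=1}{n}A_i$ forces $\la \bigcup_i(A_i-A_i)\ra\supseteq H$, so together they generate $K$. The delicate point is guaranteeing that enough sets lie inside $\alpha+K$ and carry enough excess to reach the threshold $\mathsf d^*(K)$; this is where the hypothesis $n\geq \mathsf d^*(G)$ enters, in conjunction with the superadditivity $\mathsf d^*(G)\geq \mathsf d^*(K)+\mathsf d^*(G/K)$ and the bound $e_K\leq |G/K|-2$ limiting how many sets can escape $\alpha+K$. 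Whenever the threshold cannot be met, the very same cardinality estimates instead yield $|\Sigma_n(S)|\geq \min\{|G|,\,|S'|-n+1\}$, returning us to~(i); verifying that the ``fill a full coset'' and ``sumset already large'' regimes together exhaust all cases, with the numerics matching at the boundary, is the technical heart of the proof.
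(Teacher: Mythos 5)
Your opening moves are sound and match the classical line of attack: apply Theorem \ref{thm-partition-thm}, dispose of its first alternative as conclusion (i), and, when $X$ affinely generates $G/H$, force $\Sigma_n(S)=G$ from $nX=G/H$. (The fact you use there --- a subset $B\ni 0$ generating a finite abelian group $Q$ satisfies $\mathsf d^*(Q)B=Q$ --- is true, but it is itself a lemma needing proof; it follows by induction on $|Q|$ from the superadditivity $\mathsf d^*(Q)\geq \mathsf d^*(C)+\mathsf d^*(Q/C)$.) The fatal problem is the tool you invoke for (ii)(c),(d). The ``standard iterated-sumset lemma'' you cite --- a family of subsets of a $K$-coset whose union affinely generates $K$ and whose total excess $\sum_i(|B_i|-1)$ is at least $\mathsf d^*(K)$ has full-coset sumset --- is false. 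Take $K=C_2\times C_{2m}$ with $m\geq 2$, so $\mathsf d^*(K)=2m$, and the $2m$ sets $B_1=\ldots=B_{2m-1}=\{(0,0),(1,0)\}$, $B_{2m}=\{(0,0),(0,1)\}$: each contains $0$, the union generates $K$, the excess is exactly $\mathsf d^*(K)$, yet $\sum_{i=1}^{2m}B_i=\{(0,0),(1,0),(0,1),(1,1)\}$ has only four elements. No excess-counting statement of this shape can deliver (d); that conclusion is precisely the hard content of the theorem. In the actual arguments (the original proof in \cite{oscar-weighted}, and this paper's proof of the generalization, Theorem \ref{thm-partitoincor}), the full coset is produced by induction on $|G|$: after forcing $|X|=1$, one passes to the subsequence $S_H$ of terms inside the distinguished $H$-coset, applies the Partition Theorem or the induction hypothesis to $S_H$ with $k=n-e_H$ blocks, and splices the resulting $k$-setpartition back into an $n$-setpartition via a lemma like Lemma \ref{lem-techPartitioning}. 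Nothing in your proposal plays this role, and you effectively concede this by labeling the step the unproven ``technical heart.''

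Your case analysis is also organized around the wrong subgroup. Your $K=\phi_H^{-1}(\la X\ra_*)$ \emph{contains} $H$, and when $N=|X|\geq 2$ the sets $A_i$ furnished by Theorem \ref{thm-partition-thm} may meet each $H$-coset of $\phi_H^{-1}(X)$ in a single point; only the full $n$-fold sumset is known to be $H$-periodic, so a partial sumset of $\mathsf d^*(K)$ of the $A_i$ has no chance, in general, of covering whole $H$-cosets, and (d) for such a $K$ is simply not derivable from the Partition Theorem data. The correct move when $N\geq 2$ but $\la X\ra_*<G/H$ is to show this configuration cannot occur when (i) fails, which requires either the initial reduction to $\la \supp(S)\ra_*=G$ (which you skip) or Lemma \ref{lem-partition-extra}, giving the dichotomy $\la Z\ra_*=H$ or $\la Z\ra_*=\la \supp(S)\ra_*$. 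Finally, even in the genuine case $N=1$ with $K\leq H$, your numerics do not close: to place $\mathsf d^*(K)$ blocks inside $\alpha+K$ you need $n-e_K\geq \mathsf d^*(K)$, and your route ($n\geq \mathsf d^*(G)$, superadditivity, $e_K\leq |G/K|-2$) requires $\mathsf d^*(G/K)\geq |G/K|-2$, which fails whenever $G/K$ is far from cyclic, e.g.\ $G/K\cong C_2^3$. Your fallback --- that failure of the threshold throws the situation back into (i) --- is unsupported: the Kneser-form bound yields (i) only when $e_K\geq n-1$, whereas the problematic regime has $e_K\leq n-2$. Closing exactly this gap is what the inductive structure of the real proof accomplishes, so the proposal as it stands does not prove the theorem.
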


Theorem \ref{thm-oscar-partition-cor} ensures that all but $|G/K|-2$ terms of $S$ are from the same coset $\alpha+K$ and that an entire $K$-coset can be represented using $k$-term subsequence sums with $k\leq \mathsf d^*(K)$. This improved a similar result valid for $n\geq \frac{1}{p}|G|-1$  \cite[Theorem 3]{hamconj}, where $p$ is the smallest prime divisor of $|G|$, which was the form of Partition Theorem most often employed in its earliest applications, and gave only slightly weaker conclusions than a variation implicitly known to be valid for $n\geq \frac{1}{p}|G|+p-3$ (see \cite[Exercise 15.2]{Gbook}). The goal of  this paper is to provide a single generalization of all such strengthened versions of the Partition Theorem  valid for a much more lenient, optimal value of $n$.

\begin{theorem}
\label{thm-partitoincor}
Let $G$ be a finite abelian group, let $n\geq 1$, let $S\in \Fc(G)$ be a sequence of terms from $G$ with $H=\mathsf H(\Sigma_n(S))$, and let   $S'\mid S$ be a subsequence with $\mathsf h(S')\leq n\leq |S'|$. Suppose either $H$ is trivial, equal to $G$, or that one of the following holds:
\begin{itemize}
\item[1.] $n\geq \exp(G/H)+1$,
\item[2.] $n\geq \exp(G/H)>|H|$,
\item[3.] $n\geq \exp(G/H)$ and $G/H\cong C_2\times C_{\exp(G/H)}$,
\item[4.] $n\geq \exp(G/H)-1$ and $G/H$ is cyclic.
\end{itemize}
Then there is a setpartition $\mathscr A=A_1\bdot\ldots\bdot A_n\in \sP(G)$ with $\mathsf S(\mathscr A)\mid S$ and $|\mathsf S(\mathscr A)|=|S'|$ such that
 \begin{itemize}
 \item[(i)] $|\Sigma_n(S)|\geq |\Sum{i=1}{n}A_i|\geq \min\{|G|,\, |S'|-n+1\}$, or
 \item[(ii)] there exists a nontrivial subgroup $K\leq H<G$ and $\alpha\in G$ such that the following hold:
\begin{itemize}
\item[(a)] $\Sigma_n(S)=\Sum{i=1}{n}A_i$ and $\supp(\mathsf S(\mathscr A)^{[-1]}\bdot S)\subseteq \alpha+K$,
\item[(b)]  $|\Sigma_n(S)|\geq (e_H+1)|H|$, where $e_H\leq \min\{|G/H|-2,\, \frac{|S'|-n}{|H|}-1\}$ is the number of terms of $S$ lying outside the coset $\alpha+H$, and  $|\Sigma_n(S)|\geq (e_K+1)|K|$, where $e_K\leq \min\{|G/K|-2,\, \frac{|S'|-n}{|K|}-1\}$ is the number of terms of $S$ lying outside  $\alpha+K$.
\item[(c)]  $(\alpha+K)\cap A_i\neq \emptyset$ for all $i\in [1,n]$, \ $A_i\subseteq \alpha+K$ for all $i\in [1,n-e_K]$, and $|A_i\setminus (\alpha+K)|=1$ for all $i\in [n-e_k+1,n]$,
    \item[(d)] $\Sum{i=1}{n-e_K} A_i=(n-e_K)\alpha+K$.
\end{itemize}
  \end{itemize}
\end{theorem}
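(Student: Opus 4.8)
The plan is to pass to the quotient $\bar G=G/H$ and play the Partition Theorem (Theorem~\ref{thm-partition-thm}) off against a covering argument in $\bar G$. I first dispose of the degenerate hypotheses. If $H$ is trivial, then in either alternative of Theorem~\ref{thm-partition-thm} one gets $|\Sigma_n(S)|\ge |S'|-n+1$ (the terms $e(|H|-1)$ and $\rho$ in alternative~2 being nonnegative), which is conclusion (i); if $H=G$, then $\Sigma_n(S)$ is $G$-periodic and hence equals $G$, giving (i) as well. So I may assume $\{0\}<H<G$ together with one of the four numerical hypotheses. Applying Theorem~\ref{thm-partition-thm} to $S$ and $S'$ produces the setpartition $\mathscr A$, the subset $X\subseteq G/H$, the integer $N=|X|$, and the count $e$ of terms of $S$ outside $\phi_H^{-1}(X)$. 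If alternative~1 holds we are already in conclusion (i); otherwise alternative~2 gives $\Sigma_n(S)=\sum_{i=1}^nA_i$ and, after simplification using $\rho=N|H|n+e-|S'|$, the clean bound $|\Sigma_n(S)|=|H|\cdot|\Sigma_n(\phi_H(S))|\ge |H|\big((N-1)n+e+1\big)$, along with the support and coset-intersection data that will become the skeleton of (ii).

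A direct computation shows $|H|\big((N-1)n+e+1\big)-(|S'|-n+1)\ge(|H|-1)(e-n+1)$ once $|S'|\le N|H|n+e$, so whenever $e\ge n-1$ the bound already yields $|\Sigma_n(S)|\ge |S'|-n+1$ and hence (i). Conversely, the upper bounds on $e_H$ and $e_K$ demanded in (ii)(b) are exactly the statement that we are \emph{not} in (i): if $(e+1)|H|\ge |S'|-n+1$ then conclusion (i) holds, so in the remaining case $(e+1)|H|\le |S'|-n$, which is the bound $e_H\le\frac{|S'|-n}{|H|}-1$, and the analogous inequality for the subgroup $K$ will give the companion $e_K$-bound. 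Thus the entire difficulty is concentrated in the regime $N\ge2$ with few outside terms.

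The crux is a dichotomy in $\bar G=G/H$, where $\Sigma:=\Sigma_n(\phi_H(S))$ is aperiodic by maximality of $H$: if $N\ge2$ then $\Sigma$ is forced to be large enough that $|H|\,|\Sigma|\ge\min\{|G|,|S'|-n+1\}$, returning us to (i). Each point of $X$ occurs with multiplicity $\ge n$ in $\phi_H(S)$, so for any $\bar a,\bar b\in X$ the $n$-term subsums contain the progression $n\bar b+j(\bar a-\bar b)$ with $j\in[0,n]$; since $n\ge\ord(\bar a-\bar b)-1$ (which follows from $\ord(\bar a-\bar b)\le\exp(\bar G)$ under any of the hypotheses), this progression already fills the full cyclic coset $n\bar b+\langle\bar a-\bar b\rangle$. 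The extra multiplicity guaranteed by $n\ge\exp(\bar G)+1$ then leaves spare copies of $\bar a$ and $\bar b$, so inside $\Sigma$ one may freely swap a $\bar b$ for an $\bar a$ (shifting a subsum by $+(\bar a-\bar b)$) and vice versa; this makes $\Sigma$ nearly $\langle\bar a-\bar b\rangle$-periodic, and aperiodicity of $\Sigma$ then forces it to spread over many $\langle\bar a-\bar b\rangle$-cosets, which I quantify via Kneser's Theorem applied in $\bar G/\langle\bar a-\bar b\rangle$ to conclude $|\Sigma|$ is large. I expect this covering-and-swapping step to be the main obstacle, since it is where optimality is won: the special hypotheses reduce the margin needed between $n$ and $\exp(\bar G)$ because the relevant quotient is especially simple — when $\bar G$ is cyclic (hypothesis~4) the single progression fills $\bar G$ already at $n\ge\exp(\bar G)-1$, while for $\bar G\cong C_2\times C_{\exp(\bar G)}$ (hypothesis~3) and for $\exp(\bar G)>|H|$ (hypothesis~2) a two-parameter version of the same swap argument, tracking both cyclic factors, lowers the requirement to $n\ge\exp(\bar G)$, and verifying that these thresholds are best possible is the delicate bookkeeping at the heart of the theorem.

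With $N\ge2$ settled, I may assume $N=1$, so $\phi_H^{-1}(X)=\alpha+H$ is a single coset. Theorem~\ref{thm-partition-thm} then supplies $\Sigma_n(S)=\sum_{i=1}^nA_i$, $\supp(\mathsf S(\mathscr A)^{[-1]}\bdot S)\subseteq\alpha+H$, and $|A_i\setminus(\alpha+H)|\le1$, which is (a) and, with $e_H=e$, the $H$-part of (b). It remains to locate $K\le H$ and establish (d). For this I translate the terms of $\mathsf S(\mathscr A)$ lying in $\alpha+H$ back into $H$ and consider the $n-e$ sets $A_i\subseteq\alpha+H$; setting $K=\mathsf H\big(\sum_{A_i\subseteq\alpha+H}A_i\big)\le H$ to be the stabilizer of their sumset within $H$, Kneser's Theorem forces $\sum_{i=1}^{n-e_K}A_i$ (after reindexing so that the sets meeting $\alpha+K$ come first) to be the entire coset $(n-e_K)\alpha+K$, giving (d); here $e_K$ is the number of terms outside $\alpha+K$, and repeating the ``not in (i)'' computation of the second paragraph with $K$ in place of $H$, or arguing inductively on $|G|$ since $H<G$, yields the companion bound in (b). Finally, redistributing the at most one outside element of each $A_i$ so that the sets contained in $\alpha+K$ are indexed $[1,n-e_K]$ and the remaining $e_K$ sets each have exactly one term outside $\alpha+K$ arranges (c), completing conclusion (ii).
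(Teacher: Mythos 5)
Your top-level skeleton (apply Theorem \ref{thm-partition-thm}, pass to alternative 2, prove $|X|=1$, then locate $K\le H$) is the same as the paper's, but the two steps that carry all the weight are asserted rather than proved. First, the $N\ge 2$ case. What must be shown there is $|nX|\ge\min\{|G/H|,\,|X|n\}$, and your swap argument cannot deliver this: swapping only yields implications of the form ``if $x$ has a representation using $\bar b$, then $x+(\bar a-\bar b)\in\Sigma$,'' and ``nearly periodic plus aperiodic'' does not quantify into the bound you need, while Kneser's Theorem in the aperiodic case only gives $|nX|\ge n|X|-n+1$ --- losing exactly the $n-1$ elements that the theorem's optimality hinges on. The needed statement is precisely Theorem \ref{cor1}, a structure theorem for $n$-fold sumsets from the companion paper \cite{IttI}, which the paper invokes here; note also that hypotheses 2--4 sit exactly at the exceptional thresholds $n=\exp(G/H)$ and $n=\exp(G/H)-1$ of Theorem \ref{cor1}, where genuine exceptional configurations exist and the paper's Step A must do real work to exclude them (the $K$-periodicity of $2X$ via Theorem \ref{PigeonHoleBound}, the split $e_H=0$ versus $e_H\ge 1$, and the identity $(n-1)X+(X\cup\{y\})=G/H$ for the exceptional sets). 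You also never reduce to $\la \supp(S)\ra_*=G$ nor invoke Lemma \ref{lem-partition-extra}, without which one cannot even assume $\la X\ra_*=G/H$ when applying any such sumset result. You yourself flag this step as ``the main obstacle''; flagging it is not proving it.

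Second, the $N=1$ case contains an outright false claim: setting $K=\mathsf H\big(\Sum{A_i\subseteq\alpha+H}{}A_i\big)$, you say ``Kneser's Theorem forces $\Sum{i=1}{n-e_K}A_i$ to be the entire coset.'' Kneser's Theorem is a lower bound on sumset size; it cannot force a sumset to be a single coset of its stabilizer (take $A_1=A_2=\{0,1\}$ in $\Z/5\Z$: the stabilizer is trivial and the sumset is not a point), and your $K$ may well be trivial, a case conclusion (ii) forbids and you never address. The paper obtains (ii)(d) by induction on $|G|$: it passes to the subsequence $S_H$ of terms in $\alpha+H$, extracts a maximal $T\mid S_H$ with $\h(T)\le k\le |T|$, proves the counting Steps B--E (which handle the cases where $\Sigma_k(S_H)$ is all of $G'$ or large), and otherwise applies the theorem inductively to $T\mid S_H$, which is what produces a nontrivial $K$ together with the full-coset property. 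Your closing ``redistributing'' step fails for a related reason: the sets $A_i$ produced by Theorem \ref{thm-partition-thm} control membership relative to $\alpha+H$, not $\alpha+K$, and may straddle several $K$-cosets, so (ii)(c) cannot be arranged by reindexing; the paper must build an entirely new setpartition $\mathscr C$ from the inductive one via Lemma \ref{lem-techPartitioning} (to keep total length $|S'|$) and then re-prove $\Sum{i=1}{n}C_i=\Sigma_n(S)$ by an explicit element-chasing argument. Both halves of your proposal therefore have genuine gaps, and the second rests on an incorrect use of Kneser's Theorem.
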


The proof of Theorem \ref{thm-partitoincor} will make use of a recent result characterizing the structure of an $n$-fold sumset with small sumset $|nA|<n|A|$ (Theorem \ref{cor1}).
The hypothesis that $n$ be large is used solely to show $|X|=1$ in the proof of Theorem \ref{thm-partitoincor}. If more information is known about $G$, $H$ and/or $|S|$, it may possible to combine this information with  Theorem \ref{cor1} to reduce how large $n$ must be to obtain the same conclusion. However, without such additional information, the bounds for $n\geq 2$  are optimal, as the following examples show. Since Theorem \ref{thm-partitoincor}(i) holds trivially for $n=1$, we do not worry about showing cases in Items 1--4 are optimal when the corresponding bound is $n\geq 2$ rather than $n\geq 1$.

For  the examples below, $H<G$ is a non-trivial, proper subgroup and $n\geq 1$ is an integer. We will define a subset $X\subseteq G/H$ with $nX$ aperiodic and $|nX|<n|X|$. We then set $Z=\phi_H^{-1}(X)\subseteq G$ and $S=\prod^{\bullet}_{g\in Z}g^{[n]}$. The sequence $S$ (with $S'=S$) will show the optimality of $n$ in Items 1--4.

\subsection*{Example A} Suppose $G/H=\la g\ra$ is a nontrivial cyclic group of order $|G/H|\geq 4$ and $n=\exp(G/H)-2=|G/H|-2$. Define $X=\{0,g\}$, $Z=\phi_H^{-1}(X)$ and $S=\prod^{\bullet}_{g\in Z}g^{[n]}$. Then $H=\mathsf H(\Sigma_n(S))$, \ $|S|=2|G|-4|H|$ and  $|\Sigma_n(S)|=|G|-|H|< 2|G|-4|H|-|G/H|+3=|S|-n+1$, but Theorem \ref{thm-partitoincor}(ii)(b) fails.

\subsection*{Example B} Suppose $G/H=(K/H)\oplus \la g\ra=(K/H)\oplus C_{\exp(G/H)}$ is a non-cyclic group with $\exp(G/H)\geq 3$ and $n=\exp(G/H)-1=|G/K|-1\geq 2$.  Define $X=(K/H)\cup\{g\}$, $Z=\phi_H^{-1}(X)$ and $S=\prod^{\bullet}_{g\in Z}g^{[n]}$. Then $H=\mathsf H(\Sigma_n(S))$, \ $|S|=(|G/K|-1)(|H|+|K|)$ and  $|\Sigma_n(S)|=|G|-|K|+|H|< |G|+(|H|-1)(|G/K|-1)-|K|+1=|S|-n+1$, but Theorem \ref{thm-partitoincor}(ii)(b) fails.

\subsection*{Example C} Suppose $G/H=(K/H)\oplus \la g\ra=(K/H)\oplus C_{\exp(G/H)}$ is a non-cyclic group with $|H|\geq n=\exp(G/H)\geq 2$ and $|K/H|\geq 3$.  Define $X=(K/H)\setminus \{0\}\cup\{g\}$, $Z=\phi_H^{-1}(X)$ and $S=\prod^{\bullet}_{g\in Z}g^{[n]}$. Since $|H/K|\geq 3$, we have  $H=\mathsf H(\Sigma_n(S))$. Moreover, $|S|=|G|$ and  $|\Sigma_n(S)|=|G|-|H|< |G|-n+1=|S|-n+1$, but Theorem \ref{thm-partitoincor}(ii)(b) fails.

\medskip

 If $H$ is trivial or equal to $G$, then Theorem \ref{thm-partitoincor}(i) necessarily holds, as (ii) requires $H$ to be proper and nontrivial. When $H$ is proper and nontrivial, then any of the following imply  one of Items 1--4 holds in Theorem \ref{thm-partitoincor}:
\begin{itemize}
\item[1.] $n\geq \exp(G)+1$,
\item[2.] $n\geq \exp(G)$ and $|G|<\exp(G)^2p$, where $p$ is the smallest divisor of $\frac{|G|}{\exp(G)}$ that is at least $3$,
\item[3.] $n\geq \exp(G)-1$ and  $G\cong C_p\times C_{\exp(G)}$ with $p$ prime,
\item[4.] $n\geq \frac1p|G|-1$ and $G$ is cyclic, where $p$ is the smallest prime divisor of $|G|$.
\end{itemize}
Thus Theorem \ref{thm-partitoincor} holds replacing Items 1--4 in Theorem \ref{thm-partitoincor} with Items 1--4 above. At the end of Section \ref{sec-setpartitions}, we also give a variation on Theorem \ref{thm-partitoincor}, namely Theorem \ref{thm-partitoincor-G}, where $|\Sigma_n(S)|\geq |S'|-n+1$ is replaced by $\Sigma_n(S)=G$, which corresponds to the case when we wish all elements of $G$ to be representable as $n$-term subsums of $S$. The necessary bounds for $n$ in this result are slightly better, since several sequences exhibiting the tightness of Items 1--4 above require $|S'|$ to be small in comparison to $|G|$, and can thus be eliminated when assuming $|S'|-n+1\geq |G|$.


\section{Prerequisites}\label{sec-prereq}

We begin by collecting together the main results to be used in the proof.  We begin with the following simple consequence of the pigeonhole principle \cite[Theorem 5.1]{Gbook}. Note, if $A$ and $B$ are each subsets of an $H$-coset with $|A|+|B|\geq |H|+1$, then the Pigeonhole Bound (applied to $A$ and $B$ translated so that they are subsets of the subgroup $H$) ensures that $A+B$ is an $H$-coset.

\begin{theirtheorem}[Pigeonhole Bound]\label{PigeonHoleBound} Let $G$ be an abelian group and let $A,\,B\subseteq G$ be finite subsets. If $|A|+|B|\geq |G|+r$ with $r\geq 1$ an integer, then $A+B=G$ with $\mathsf r_{A+B}(x)\geq r$ for every $x\in G$.
\end{theirtheorem}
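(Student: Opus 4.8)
The plan is to prove this directly by a single application of the inclusion–exclusion counting principle in the finite group $G$, after reinterpreting the representation function as the size of an intersection of two translated sets. The only inputs needed are the definition $\mathsf r_{A+B}(x)=|(x-B)\cap A|$ recorded at the start of the paper and the elementary fact that translation and negation are bijections of $G$.

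First I would fix an arbitrary element $x\in G$ and work to lower-bound $\mathsf r_{A+B}(x)$. By the definition of the representation function, $\mathsf r_{A+B}(x)=|(x-B)\cap A|$, so the task reduces to showing that the sets $A$ and $x-B:=\{x-b:\;b\in B\}$ overlap in at least $r$ elements. The key observation is that $x-B$ is the image of $B$ under the map $y\mapsto x-y$, which is a bijection of $G$; hence $|x-B|=|B|$. Both $A$ and $x-B$ are subsets of the finite set $G$, so inclusion–exclusion gives
$$|(x-B)\cap A|=|A|+|x-B|-|A\cup(x-B)|\geq |A|+|B|-|G|.$$

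Now I would invoke the hypothesis $|A|+|B|\geq |G|+r$, which yields $\mathsf r_{A+B}(x)=|(x-B)\cap A|\geq |A|+|B|-|G|\geq r$. Since $x\in G$ was arbitrary, this bound $\mathsf r_{A+B}(x)\geq r$ holds for every $x\in G$. Finally, because $r\geq 1$, every $x\in G$ satisfies $\mathsf r_{A+B}(x)\geq 1$, which by the remark $A+B=\{x\in G:\;\mathsf r_{A+B}(x)\geq 1\}$ means $A+B=G$, completing the proof.

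There is essentially no substantive obstacle here; the result is a one-line pigeonhole estimate. The only point requiring the slightest care is confirming that the intersection $(x-B)\cap A$ is exactly the set counted by $\mathsf r_{A+B}(x)$ and that passing from $B$ to $x-B$ preserves cardinality, both of which are immediate from the definitions. The uniformity of the bound over all $x$ (rather than a bound on a single representation) is what upgrades the conclusion from merely $A+B=G$ to the stronger statement that every element has at least $r$ representations.
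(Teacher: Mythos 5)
Your proof is correct and is precisely the standard pigeonhole argument the paper has in mind when it cites this as ``a simple consequence of the pigeonhole principle'' from \cite[Theorem 5.1]{Gbook}: identify $\mathsf r_{A+B}(x)=|(x-B)\cap A|$, use that $y\mapsto x-y$ is a bijection so $|x-B|=|B|$, and bound the intersection below by $|A|+|B|-|G|\geq r$. No gaps; nothing further is needed.
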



The following two theorems give a  structural characterization of sumsets with $|nA|<n|A|$. The first is a special case of \cite[Corollary 3.2]{IttI}, while the second is \cite[Corollary 3.3]{IttI}.

\begin{theirtheorem}\label{cor1}
Let $G$ be a finite abelian group,  let $A\subseteq G$ be a nonempty subset with $\la A\ra_*=G$, let $n\geq 3$ be an integer, and let $K=\mathsf H(nA)$. If $n\geq \exp(G)+1$, then $|nA|\geq \min\{|G|,\,n|A|\}.$ If $n\geq \exp(G)-1$ and $|nA|< \min\{|G|,\,n|A|\}$, then one of the following holds.
\begin{itemize}

 \item[1.]     $n=\exp(G)$, \ $G=H\oplus \la g\ra\cong H\times C_{\exp(G)}$ with $K<H$, \ $|A|n\leq |G|$, \ $|G|-|K|=|nA|\geq |A+K|n-|K|$, and either
     \begin{itemize}
     \item[(a)] $H/K=H_1/K\oplus H_2/K\cong C_2^2$ and  $z+A+K=H_1\cup (g+H_2)$ for some $z\in G$, or
     \item[(b)]  $|H/K|\geq 3$ and $z+A+K=(H\setminus K)\cup (g+K)$ for some $z\in G$.
     \end{itemize}
\item[2.]     $n=\exp(G)-1$, \  $G\cong H\times C_{\exp(G)}$ with $K<H$ proper, $|\phi_H(A)|=2$, and either
\begin{itemize}
\item[(a)] $z+A+K=H\cup (A_0+K)$ for some $z\in G$ with $A_0=A\setminus H\neq \emptyset$, \  $|A|n\leq |G|$, and  $|nA|=|G|-|H|+|n(A_0+K)|$, or
\item[(b)] $G=H_0\oplus H_1\oplus\ldots \oplus H_r$ with  $K<H_0$ proper, $r\geq 1$ and $H_i=\la x_i\ra\cong C_{\exp(G)}$ for all $i\in [1,r]$, \ $z+A+K= \bigcup_{j=0}^{r}\big(K+\Sum{i=0}{j-1}H_i+\Sum{i=j+1}{r}x_i\big)$ for some $z\in G$, $|A|n\leq |G|-|H_0|+(\exp(G)-1)|K|\leq \frac{p\exp(G)^r+\exp(G)-p-1}{p\exp(G)^r}|G|$, where $p$ is the smallest prime divisor of $\exp(H_0)$, and $|nA|=|G|-|H_0|+|K|$.
\end{itemize}
\end{itemize}
\end{theirtheorem}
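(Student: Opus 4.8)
The plan is to prove the structural dichotomy (the second assertion, where $|nA|<\min\{|G|,n|A|\}$) by induction on $|G|$, and to deduce the growth bound $|nA|\geq\min\{|G|,n|A|\}$ for $n\geq\exp(G)+1$ from it: were $n\geq\exp(G)+1$ and $|nA|<\min\{|G|,n|A|\}$, the dichotomy (which applies since $n\geq\exp(G)-1$) would force $n\in\{\exp(G)-1,\exp(G)\}$, a contradiction. So it suffices to treat $\exp(G)-1\leq n$ with $|nA|<\min\{|G|,n|A|\}$ and produce one of Items 1--2. Throughout I translate so that $0\in A$; then $\langle A\rangle=G$, the iterated sumsets form an increasing chain $A\subseteq 2A\subseteq\cdots\subseteq nA$, and these inclusions are strict until the chain reaches $G$: if $jA=(j+1)A=jA+A$ then $jA+a=jA$ for every $a\in A$, so $A\subseteq\mathsf H(jA)$ and hence $jA=\langle A\rangle+jA=G$. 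Since $|nA|<|G|$, every $jA$ with $j\leq n$ is therefore a proper, strictly growing subset.

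First I would dispose of the periodic case $K=\mathsf H(nA)\neq\{0\}$ by passing to the quotient $\bar G=G/K$, where $|\bar G|<|G|$. With $\bar A=\phi_K(A)$, the sumset $n\bar A=\phi_K(nA)$ is aperiodic and $\langle\bar A\rangle_*=\bar G$; using $|nA|=|n\bar A|\,|K|$, $|A+K|=|\bar A|\,|K|$ and $|A|\leq|A+K|$, the hypothesis $|nA|<\min\{|G|,n|A|\}$ becomes $|n\bar A|<\min\{|\bar G|,n|\bar A|\}$. Since $n\geq\exp(G)-1\geq\exp(\bar G)-1$, the induction hypothesis applies in $\bar G$ and yields one of Items 1--2 for $\bar A$ (with trivial stabilizer $\mathsf H(n\bar A)$), together with $n=\exp(\bar G)$ or $n=\exp(\bar G)-1$. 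As $\exp(\bar G)\mid\exp(G)$ and $n\geq\exp(G)-1$, this forces $\exp(\bar G)=\exp(G)$ and hence the correct value of $n$ in $G$; pulling the aperiodic description back along $\phi_K$ --- matching a cyclic direct factor $\langle\bar g\rangle\cong C_{\exp(\bar G)}$ of $\bar G$ to one of $G$ with $K$ absorbed into the complementary subgroup $H$ --- reproduces the stated forms for $z+A+K$. This reduces the theorem to the aperiodic core $\mathsf H(nA)=\{0\}$, which is the base of the induction and cannot be simplified by a further quotient.

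For the aperiodic core I would first observe that the stabilizers $K_j=\mathsf H(jA)$ are nondecreasing in $j$: if $jA+K_j=jA$ then $(j+1)A+K_j=(jA+K_j)+A=(j+1)A$, so $K_j\leq K_{j+1}$. Hence $K_n=\{0\}$ forces \emph{every} $jA$ with $j\leq n$ to be aperiodic. Applying Kneser's Theorem to each step $jA=(j-1)A+A$ (all stabilizers trivial) gives $|jA|\geq|(j-1)A|+|A|-1$, and summing produces the baseline $|nA|\geq n|A|-(n-1)$; combined with $|nA|\leq n|A|-1$ this shows the total Kneser slack over the $n-1$ steps is at most $n-2$, so the bound $|XY|=|X|+|Y|-1$ is met with equality at many of the steps. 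I would then invoke the Kemperman Structure Theorem, which describes every aperiodic pair attaining $|X+Y|=|X|+|Y|-1$ as either an arithmetic‑progression configuration along a common cyclic direction or a recursively quasi‑periodic configuration over a proper subgroup. Feeding this into the chain $A\subseteq 2A\subseteq\cdots\subseteq nA$, with $0\in A$ and $\langle A\rangle=G$, should propagate a single progression direction: one extracts a cyclic direct factor $\langle g\rangle\cong C_{\exp(G)}$ along which $\phi_H(A)$ behaves like a short progression (of length $2$ in Item 2) while $A$ saturates the complementary subgroup $H$. The hypothesis $n\geq3$ removes the sporadic small‑sum exceptions of Kemperman/Vosper type at the first step $2A=A+A$.

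The hard part will be this last propagation: converting the per‑step Kemperman description into a global statement about $A$. Because the Kemperman Structure Theorem is recursive (its quasi‑periodic alternative refers the analysis to a proper quotient), the real difficulty is to control how the cyclic direction and the subgroup $H$ are inherited from step $j$ to step $j+1$, to exclude any $K_j$ becoming nontrivial prematurely (which we have already ruled out globally, but must track level by level), and to verify that the only configurations surviving all $n\geq\exp(G)-1$ iterations with $nA$ still aperiodic and proper are exactly the four families. Counting how many steps the progression needs to wrap around $C_{\exp(G)}$ without the sumset filling $G$ or acquiring a stabilizer is what pins $n$ to $\exp(G)$ or $\exp(G)-1$ and matches $A$ against the explicit coset‑unions of Items 1(a), 1(b), 2(a), 2(b). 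Once the coset structure of $A$ is identified, the remaining quantitative assertions --- such as $|A|n\leq|G|$, the value $|nA|=|G|-|H|+|n(A_0+K)|$ in 2(a), and the bound on $|A|n$ in 2(b) --- are a direct bookkeeping computation; the combinatorial case analysis isolating the four families is where the essential work lies.
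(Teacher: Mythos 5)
This statement is not proved in the paper at all: it is a prerequisite, quoted (as a lettered ``Theorem'') from \cite[Corollary 3.2]{IttI}, so there is no internal proof to compare you against, and your proposal must stand or fall on its own. Its scaffolding is sound. Deducing the $n\geq \exp(G)+1$ assertion from the dichotomy, normalizing $0\in A$, the strict growth of the chain $A\subseteq 2A\subseteq \cdots$ until it reaches $G$, the monotonicity $\mathsf H(jA)\leq \mathsf H((j+1)A)$, and the reduction modulo $K=\mathsf H(nA)$ are all correct; in particular the lifting of the cyclic factor works cleanly, since any preimage $g$ of $\bar g$ has order exactly $\exp(G)$ and $\la g\ra\cap \phi_K^{-1}(\bar H)=\{0\}$, and the forcing of $\exp(G/K)=\exp(G)$ from $n\geq \exp(G)-1$ is fine except for the trivial exponent-$2$ edge case you should dispatch explicitly.

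The genuine gap is that the entire content of the theorem lives in the aperiodic core, and there your argument is a plan rather than a proof, with two concrete failures. First, your quantitative handle is weaker than you claim: $|nA|\leq n|A|-1$ bounds the total Kneser slack over the $n-1$ steps by $n-2$, which guarantees only \emph{one} index $j$ with $|(j-1)A+A|=|(j-1)A|+|A|-1$; the assertion that equality is ``met at many of the steps'' is unjustified, and with a single critical step the propagation along the chain has no engine. Second, even at that one step, the Kemperman Structure Theorem describes the \emph{pair} $\bigl((j-1)A,\,A\bigr)$, and its recursive quasi-periodic branches may attach all the structure to the iterated sumset $(j-1)A$ rather than to $A$; transferring that description back to $A$, excluding premature stabilizers level by level, explaining why $n$ gets pinned to $\exp(G)$ or $\exp(G)-1$, and showing that the only survivors are precisely the four families --- note that the configurations in Items 1(a), 1(b) and 2(b), such as $z+A+K=(H\setminus K)\cup(g+K)$ or the union $\bigcup_{j=0}^{r}\bigl(K+\Sum{i=0}{j-1}H_i+\Sum{i=j+1}{r}x_i\bigr)$, are unions of cosets of \emph{different} subgroups and not progression-like, so the ``single progression direction'' you propose to propagate cannot by itself produce them --- is exactly the theorem, and you explicitly defer it (``should propagate,'' ``the hard part will be''). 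Likewise the claim that $n\geq 3$ removes the Vosper-type exceptions at the first step is asserted, not argued. As written, this is a correct reduction to the aperiodic case together with an accurate description of the remaining difficulty, not a proof of the statement.
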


\begin{theirtheorem}\label{cor2}
Let $G$ be a finite abelian group,  let $A\subseteq G$ be a nonempty subset with $\la A\ra_*=G$, let $n\geq 1$ be an integer,  let $K=\mathsf H(nA)$ and suppose $n|A|>|G|.$
\begin{itemize}
\item[1.] If $n\geq \exp(G)$, then $nA=G$.
\item[2.] If $n=\exp(G)-1$ and $nA\neq G$, then $\exp(G)$ is composite, $G=H_0\oplus H_1\oplus\ldots \oplus H_r$ with  $K<H_0$ proper, $r\geq 1$ and $H_i=\la x_i\ra\cong C_{\exp(G)}$ for all $i\in [1,r]$ (thus $G$ is non-cyclic),  $$z+A+K= \bigcup_{j=0}^{r}\big(K+\Sum{i=0}{j-1}H_i+\Sum{i=j+1}{r}x_i\big)\quad\mbox{ for some $z\in G$},$$ $|A|n\leq |G|-|H_0|+(\exp(G)-1)|K|\leq \frac{p\exp(G)^r+\exp(G)-p-1}{p\exp(G)^r}|G|$, where $p$ is the smallest prime divisor of $\exp(H_0)$, and $|nA|=|G|-|H_0|+|K|$.
\end{itemize}
\end{theirtheorem}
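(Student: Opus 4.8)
The plan is to derive Theorem \ref{cor2} as a short consequence of the structural dichotomy in Theorem \ref{cor1}, exploiting the single observation that the hypothesis $n|A|>|G|$ flatly contradicts the size bound $|A|n\leq |G|$ attached to several of the exceptional cases of Theorem \ref{cor1}. Since $n|A|>|G|$ forces $\min\{|G|,\,n|A|\}=|G|$, and $nA\subseteq G$, the condition $nA\neq G$ is throughout equivalent to $|nA|<\min\{|G|,\,n|A|\}$, which is exactly the trigger for the structural conclusions of Theorem \ref{cor1}. Thus the whole argument reduces to matching the case hypotheses of Theorem \ref{cor1} against the inequality $n|A|>|G|$.

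First I would dispatch the small cases $n\leq 2$, where Theorem \ref{cor1} (which requires $n\geq 3$) does not apply. For nontrivial $G$ one has $\exp(G)\geq 2$, so the only relevant small cases are $n=\exp(G)=2$ in Part 1, and $\exp(G)\in\{2,3\}$ in Part 2. When $n=1$ the hypothesis $n|A|>|G|$ is impossible for $A\subseteq G$. When $n=2$, writing $nA=A+A$ and noting $|A|+|A|=n|A|>|G|$, the Pigeonhole Bound (Theorem \ref{PigeonHoleBound}) yields $A+A=G$, hence $nA=G$. In every small case $nA=G$ is forced; this settles Part 1 for these cases, and it makes the hypothesis $nA\neq G$ of Part 2 vacuous (consistent with the fact that $\exp(G)=2,3$ are prime, so the Part 2 conclusion could not otherwise hold).

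For $n\geq 3$ I would invoke Theorem \ref{cor1} directly. In Part 1, if $n\geq \exp(G)+1$ its first assertion already gives $|nA|\geq \min\{|G|,\,n|A|\}=|G|$, so $nA=G$; if instead $n=\exp(G)$ and $nA\neq G$, then $|nA|<\min\{|G|,\,n|A|\}$ places us in the dichotomy, and since $n=\exp(G)$ we must land in Case 1, which carries the bound $|A|n\leq |G|$ contradicting $n|A|>|G|$ — so in fact $nA=G$. In Part 2, with $n=\exp(G)-1$ and $nA\neq G$, Theorem \ref{cor1} applies again: Case 1 is excluded since $n\neq\exp(G)$, and Case 2(a) is excluded because it too asserts $|A|n\leq |G|$. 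This leaves exactly Case 2(b), whose description coincides verbatim with the structural conclusion claimed in Part 2 of Theorem \ref{cor2}, including the subgroup decomposition $G=H_0\oplus H_1\oplus\ldots\oplus H_r$, the formula for $z+A+K$, and the equalities/inequalities for $|A|n$ and $|nA|$.

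The remaining point, which I expect to be the only step involving a genuine computation rather than bookkeeping, is showing $\exp(G)$ is composite. Combining $n|A|>|G|$ with the Case 2(b) bound $|A|n\leq |G|-|H_0|+(\exp(G)-1)|K|$ gives $|G|<|G|-|H_0|+(\exp(G)-1)|K|$, that is $|H_0|<(\exp(G)-1)|K|$. If $\exp(G)$ were a prime $p$, then $G$ would be elementary abelian, $H_0\cong C_p^s$, and the proper containment $K<H_0$ would force $|H_0|\geq p|K|>(p-1)|K|=(\exp(G)-1)|K|$, contradicting the inequality just obtained. Hence $\exp(G)$ is not prime, and since $n=\exp(G)-1\geq 3$ forces $\exp(G)\geq 4$, it is composite. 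This completes the deduction: all substantive content is carried by Theorem \ref{cor1}, and Theorem \ref{cor2} is extracted from it purely by testing each exceptional case against the constraint $n|A|>|G|$.
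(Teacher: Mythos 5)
You should first note a mismatch with the framing of the task: this paper never proves Theorem \ref{cor2}. It is quoted verbatim as a prerequisite, cited to \cite[Corollary 3.3]{IttI}, so there is no in-paper proof to compare yours against; your proposal has to be judged on its own merits as a derivation from the other quoted result, Theorem \ref{cor1}. On those terms it is correct and complete. The pivot $nA\neq G\iff |nA|<\min\{|G|,\,n|A|\}$ under $n|A|>|G|$ is right, and the small cases are handled properly: $n=1$ makes $n|A|>|G|$ impossible, and $n=2$ forces $A+A=G$ by Theorem \ref{PigeonHoleBound}, which covers $n=\exp(G)=2$ in Part 1 and renders Part 2 vacuous for $\exp(G)\in\{2,3\}$. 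For $n\geq 3$ the case-matching works exactly as you say, because each exceptional case of Theorem \ref{cor1} asserts the value of $n$ as part of its conclusion (so Case 1 is unavailable when $n=\exp(G)-1$ and Case 2 is unavailable when $n=\exp(G)$), and Cases 1 and 2(a) both carry the bound $|A|n\leq |G|$, which $n|A|>|G|$ immediately kills; what survives in Part 2 is Case 2(b), whose conclusion is transcribed verbatim into Theorem \ref{cor2}. The only genuinely new content you had to supply is the compositeness of $\exp(G)$, since Case 2(b) of Theorem \ref{cor1} does not state it, and your argument there is sound: $n|A|>|G|$ against $|A|n\leq |G|-|H_0|+(\exp(G)-1)|K|$ gives $|H_0|<(\exp(G)-1)|K|$, whereas a prime exponent $p$ would make $G$ elementary abelian, so that $K<H_0$ proper forces $|H_0|\geq p|K|>(p-1)|K|$, a contradiction; together with $\exp(G)\geq 4$ (from $n\geq 3$) this yields compositeness. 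This deduction-by-elimination is almost certainly how \cite{IttI} obtains its Corollary 3.3 from its Corollary 3.2 (of which Theorem \ref{cor1} is stated to be a special case), so your route is the natural one, and nothing in it uses more than what the two quoted theorems assert.
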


The following lemma   combines with the Partition Theorem to show that only one of two extremes is possible for the subgroup $\la X\ra_*$.

\begin{lemma}\cite[Lemma 4.1]{IttI}\label{lem-partition-extra}
Let $G$ be an abelian group, let $n\geq 1$, let $S\in \Fc(G)$ be a sequence, let $S'\mid S$ be a subsequence  with $\mathsf h(S')\leq n\leq |S'|$, let $H=\mathsf H(\Sigma_n(S))$,   let $X\subseteq G/H$ be the subset of all $x\in G/H$ for which $x$ has multiplicity at least $n$ in $\phi_H(S)$, and let $Z=\phi_H^{-1}(X)$. Suppose $|\Sigma_n(S)|<|S'|-n+1$. Then either $$\la Z\ra_*=H\quad\mbox{ or }\quad\la Z\ra_*=\la \supp(S)\ra_*.$$
\end{lemma}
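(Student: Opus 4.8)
The plan is to pass to the quotient $G/H$, recast the statement as a dichotomy for the high-multiplicity set $X$, and then contradict the size hypothesis $|\Sigma_n(S)|<|S'|-n+1$.

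\emph{Reductions.} Since any two $n$-term subsums of $S$ differ by an element of $L:=\la\supp(S)\ra_*$, the set $\Sigma_n(S)$ lies in a single $L$-coset, so $H=\mathsf H(\Sigma_n(S))\le L$. Hence both $\la Z\ra_*$ and $L$ are $H$-periodic, and, setting $Y:=\supp(\phi_H(S))=\phi_H(\supp(S))\supseteq X$, one gets $\la Z\ra_*=\phi_H^{-1}(\la X\ra_*)$ and $L=\phi_H^{-1}(\la Y\ra_*)$. Thus $\la Z\ra_*=H$ is equivalent to $\la X\ra_*=\{0\}$, and $\la Z\ra_*=L$ to $\la X\ra_*=\la Y\ra_*$, so it suffices to prove the dichotomy $\la X\ra_*\in\{\{0\},\,\la Y\ra_*\}$ inside $\bar G:=G/H$, where $\Sigma_n(\phi_H(S))=\phi_H(\Sigma_n(S))$ is aperiodic by definition of $H$. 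I would also observe that the hypothesis forces $H$ to be proper and nontrivial and $X\neq\emptyset$: were $|H|=1$ or $X=\emptyset$, the bound furnished by Theorem \ref{thm-partition-thm} (in either of its alternatives) would already give $|\Sigma_n(S)|\ge |S'|-n+1$.

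\emph{Extracting the size bound.} Because $|\Sigma_n(S)|<|S'|-n+1$, alternative 1 of the Partition Theorem fails, so alternative 2 supplies a setpartition $\mathscr A=A_1\bdot\ldots\bdot A_n$ with $\phi_H(A_i)\in\{X,\,X\cup\{y_i\}\}$ for all $i$, where $e$ is the number of indices with $\phi_H(A_i)\neq X$. Feeding the Kneser lower bound of alternative 2 and the relation $\rho\ge 0$ into the hypothesis and then dividing by $|H|\ge 2$ yields $e\le n-2$ together with the crucial estimate $|\Sigma_n(\phi_H(S))|<n|X|$. Since each $x\in X$ has multiplicity at least $n$ in $\phi_H(S)$ we have $nX\subseteq\Sigma_n(\phi_H(S))$, and aperiodicity gives $\Sigma_n(\phi_H(S))\neq\bar G$; so $\Sigma_n(\phi_H(S))$ is an aperiodic set of size $<\min\{|\bar G|,\,n|X|\}$ containing $nX$.

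\emph{The contradiction, and the main obstacle.} Suppose $\bar K:=\la X\ra_*$ is nontrivial but $\bar K\neq\la Y\ra_*$. After translating so that $0\in X$ (hence $X\subseteq\bar K$), there is some $y\in Y\setminus\bar K$. Every $n$-term subsum has the form $\sigma(Q_0)+(n-|Q_0|)X$ with $Q_0$ a sub-multiset of the $(Y\setminus X)$-terms and $|Q_0|\le e$; each such piece lies in a single $\bar K$-coset, and already $nX\subseteq\bar K$ and $y+(n-1)X\subseteq y+\bar K$ show that at least two distinct $\bar K$-cosets are met. Applying the Subsum Kneser Theorem \ref{thm-subsum-kneser} inside $\bar G$ gives only $|\Sigma_n(\phi_H(S))|\ge n|X|-(n-1-e)$, so to contradict $|\Sigma_n(\phi_H(S))|<n|X|$ I must recover the missing $n-1-e$ elements from the fact that the subsums are forced to spread over several $\bar K$-cosets. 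I would split on saturation of the smallest piece: if $(n-e)X=\bar K$, then every piece fills its coset, so $\Sigma_n(\phi_H(S))$ is $\bar K$-periodic, contradicting aperiodicity because $\bar K\neq\{0\}$; and if $(n-e)X\subsetneq\bar K$, I would combine the monotonicity of the sizes $|mX|$ with the distinct shifted copies occupying separate $\bar K$-cosets to force $|\Sigma_n(\phi_H(S))|\ge n|X|$. This second alternative is the crux: when $e$ is large many cosets are occupied and some may already be full while others are not, and pinning the total down to at least $n|X|$ is exactly where the quantitative description of small iterated sumsets in Theorems \ref{cor1} and \ref{cor2}, applied to $X$ inside $\bar K$, must be invoked. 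In either case the bound $|\Sigma_n(\phi_H(S))|<n|X|$ is violated, so $\bar K=\{0\}$ or $\bar K=\la Y\ra_*$, which is the claimed dichotomy.
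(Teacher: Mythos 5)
Your reductions are sound: passing to $\bar G=G/H$, showing the hypothesis forces $|H|\geq 2$ and $X\neq\emptyset$, extracting $e\leq n-2$ and $|\Sigma_n(\phi_H(S))|<n|X|$ from alternative 2 of Theorem \ref{thm-partition-thm}, and the saturated branch (if $(n-e)X=\bar K:=\la X\ra_*$ then every piece $\sigma(Q_0)+(n-|Q_0|)X$ is a full $\bar K$-coset, so $\Sigma_n(\phi_H(S))$ is $\bar K$-periodic, contradicting the definition of $H$) are all correct. But the unsaturated branch $(n-e)X\subsetneq\bar K$ --- which you yourself flag as the crux --- is never actually proved; you only describe what ``must be invoked.'' That case is the entire content of the lemma: when some intermediate sumset $mX$ with $n-e\leq m\leq n$ is periodic with a nontrivial stabilizer $L<\bar K$, Kneser's Theorem bounds $|mX|$ only in terms of $|X+L|$ rather than $|X|$ (the ``holes'' can be arbitrarily many), the pieces $\sigma(Q_0)+(n-|Q_0|)X$ can share $\bar K$-cosets and so do not contribute additively, and the correct target in this case is not simply $|\Sigma_n(\phi_H(S))|\geq n|X|$ but ``large \emph{or} globally periodic.'' No monotonicity-of-$|mX|$ argument closes this; it requires a genuine threshold-of-periodicity analysis.

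Moreover, the tool you propose for filling the gap cannot apply. Theorems \ref{cor1} and \ref{cor2} require the ambient group to be \emph{finite}, require $n\geq \exp(\cdot)-1$ (with the clean bound $|nA|\geq\min\{|G|,n|A|\}$ only for $n\geq\exp(\cdot)+1$), and Theorem \ref{cor2} additionally requires $n|A|>|G|$. None of this is available here: Lemma \ref{lem-partition-extra} is stated for every $n\geq 1$ in an \emph{arbitrary} abelian group, with no relation whatsoever between $n$ and $\exp(\la X\ra_*)$ --- the subgroup $\la X\ra_*$ may even be infinite. A concrete instance your plan cannot touch: $X=\{0,x\}$ with $\ord(x)$ infinite or far larger than $n$; then neither Theorem \ref{cor1} nor Theorem \ref{cor2} says anything about $mX$ for the relevant $m\leq n$. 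This is not an accident of formulation: within this paper the lemma is imported from \cite{IttI} as an unconditional prerequisite and is used in the proof of Theorem \ref{thm-partitoincor} precisely \emph{before} the exponent hypotheses of Items 1--4 enter (those hypotheses are reserved for the subsequent application of Theorem \ref{cor1} to $nX$). So an argument for the lemma that itself leans on Theorems \ref{cor1}--\ref{cor2} inverts the logical architecture and, more to the point, fails outright whenever $n$ is small compared to $\exp(\la X\ra_*)$. The crux case therefore stands unproven, and the route you propose for it would not succeed.
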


\section{Setpartitions}\label{sec-setpartitions}

We continue with the following technical lemma. Worth noting, the condition $\h(S)\leq n\leq |S|$ is a characterization of when there is a setpartition $\A=A_1\bdot\ldots\bdot A_n$ with $\mathsf S(\A)=S$; see \cite[Proposition 10.1]{Gbook}.

\begin{lemma}\label{lem-techPartitioning}
Let $G$ be a group, let $n\geq k\geq 1$ be integers, and let $S\in \Fc(G)$ be a sequence of terms from $G$ having a subsequence $S'\mid S$ with $\mathsf h(S')\leq n\leq |S'|$. Let $T\mid S$ be a maximal length subsequence such that $\mathsf h(T)\leq k\leq |T|$ and $|T|\leq |S'|-(n-k)$. Then there is a subsequence $T'\mid T^{[-1]}\bdot S$ such that $|T|+|T'|=|S'|$ and $\mathsf h(T')\leq n-k\leq |T'|$.
\end{lemma}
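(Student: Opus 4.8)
The plan is to reduce the entire statement to a single counting inequality. Write $R = T^{[-1]}\bdot S$ for the sequence left after deleting $T$ from $S$, and set $L = |S'| - |T|$; the hypothesis $|T| \le |S'| - (n-k)$ gives $L \ge n-k \ge 0$. The subsequence $T'$ we must produce is required to satisfy $T' \mid R$, $|T'| = L$ and $\h(T') \le n-k$ (the inequality $n-k \le |T'|$ then holds automatically, since $|T'| = L \ge n-k$). I claim everything follows once we prove
\begin{equation*}
\sum_{g\in G}\min\{\vp_g(R),\,n-k\}\ \ge\ L. \tag{$*$}
\end{equation*}
Granting $(*)$, I would build $T'$ term by term: for each $g$ pick an integer $c_g$ with $0\le c_g\le \min\{\vp_g(R),\,n-k\}$ and $\sum_g c_g = L$. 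This is possible because the maximal attainable total is exactly $\sum_g \min\{\vp_g(R),n-k\}\ge L\ge 0$, and every integer between $0$ and this maximum is realized by lowering the $c_g$ one unit at a time. Setting $T' = \prod^{\bullet}_{g\in G} g^{[c_g]}$ then gives $T'\mid R$, $|T'| = L$ and $\h(T') = \max_g c_g \le n-k$, as needed. One should also note at the outset that a valid $T$ exists: since $|S|\ge |S'|\ge n\ge k$, any $k$ terms of $S$ form a subsequence of length $k$ and maximal multiplicity at most $k$, satisfying $k \le |T| \le |S'|-(n-k)$, so a maximal-length valid $T$ exists.

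To prove $(*)$ I would split according to whether the upper length constraint on $T$ is tight. If $|T| = |S'|-(n-k)$, then $L = n-k$ and I use the elementary bound $\sum_g\min\{\vp_g(R),m\}\ge \min\{|R|,m\}$, valid for every $m\ge 0$ (if the left side were $<m$, then no $\vp_g(R)$ reaches $m$, so the left side equals $|R|$); applying it with $m = n-k$ and using $|R| = |S|-|T|\ge |S'|-|T| = n-k$ yields $(*)$. If instead $|T| < |S'|-(n-k)$, I exploit the maximality of $T$: appending one more copy of any $g\in\supp(R)$ to $T$ keeps both length bounds (the lower bound already holds, and $|T|+1\le |S'|-(n-k)$), so maximality must be obstructed by $\h$, forcing $\vp_g(T) = k$ for every $g\in\supp(R)$. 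With this in hand I would verify the termwise inequality $\min\{\vp_g(R),n-k\}\ge \vp_g(S')-\vp_g(T)$: it is trivial when $\vp_g(S')\le\vp_g(T)$; and when $g\in\supp(R)$ one has $\vp_g(T) = k$, whence $\vp_g(S')-k\le n-k$ (as $\vp_g(S')\le\h(S')\le n$) and $\vp_g(S')-k\le\vp_g(R)$ (as $\vp_g(S') \le \vp_g(S) = k+\vp_g(R)$). Summing over $g\in G$ gives $\sum_g\min\{\vp_g(R),n-k\}\ge |S'|-|T| = L$, which is $(*)$.

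The step I expect to be the crux is this second case: correctly reading off from the maximality of $T$ that $\vp_g(T) = k$ for all $g\in\supp(R)$, and then finding the sharp termwise estimate against $\vp_g(S')-\vp_g(T)$ that makes the sum collapse to $|S'|-|T|$. The termwise bound genuinely fails without the hypothesis $\vp_g(T) = k$ (for instance when $\vp_g(S') = n$ but $\vp_g(T) = 0$), which is precisely why the tight case $|T| = |S'|-(n-k)$ must be peeled off and dispatched separately by the cruder length bound. Everything else is routine bookkeeping with multiplicities, using only the identities $\vp_g(S) = \vp_g(T)+\vp_g(R)$ and the inequalities $\vp_g(S')\le\vp_g(S)$ and $\h(S')\le n$.
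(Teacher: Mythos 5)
Your proof is correct and takes essentially the same approach as the paper: the same case split on whether $|T|=|S'|-(n-k)$ is tight, and the same use of the maximality of $T$ to force $\vp_g(T)=k$ for all $g\in\supp(T^{[-1]}\bdot S)$, followed by a counting argument. The only divergence is in the closing count, where you sum the termwise bound $\min\{\vp_g(T^{[-1]}\bdot S),\,n-k\}\geq \vp_g(S')-\vp_g(T)$ over $g\in G$, while the paper instead takes a maximal subsequence $R\mid T^{[-1]}\bdot S$ with $\h(R)\leq n-k$, uses a second maximality argument to force $\vp_x(R)=n-k$ on $\supp(R^{[-1]}\bdot T^{[-1]}\bdot S)$, and then compares $|S|-|S'|$ against $\Summ{x}(\vp_x(S)-n)$ --- the same ingredients ($\h(S')\leq n$ and $\vp_g(S)=\vp_g(T)+\vp_g(T^{[-1]}\bdot S)$) arranged slightly differently.
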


\begin{proof}
If $|T|=|S'|-(n-k)$, then $|T^{[-1]}\bdot S|=|S|-|S'|+(n-k)\geq n-k$, and the lemma follows taking $T'\mid T^{[-1]}\bdot S$ to be any subsequence of length $n-k\geq 0$. Therefore we may assume $|T|<|S'|-(n-k)$. In this case, the maximality of $T$ ensures that  $$\vp_x(T)=k\quad\mbox{ for any $x\in \supp(T^{[-1]}\bdot S)$},$$ else $T\bdot x$ would contradict the maximality of $T$.

The condition $|T|\leq |S'|-(n-k)\leq |S'|$ ensures that any subsequence $T'\mid T^{[-1]}\bdot S$ with $|T'|=|S'|-|T|\geq 0$ will have $n-k\leq |T'|$. Thus it suffices to show there is some subsequence $R\mid T^{[-1]}\bdot S$ with $\h(R)\leq n-k$ and $|R|\geq |S'|-|T|$, as then any subsequence $T'\mid R$ with $|T'|=|S'|-|T|$ will satisfy the conclusion of the lemma. To this end, consider a maximal length subsequence
$R\mid T^{[-1]}\bdot S$  with $\h(R)\leq n-k$. We must show  that $|R|\geq |S'|-|T|$.

In view of the maximality of $|R|$, we must have $\vp_x(R)= n-k$ for any $x\in \supp(R^{[-1]}\bdot T^{[-1]}\bdot  S)$, in which case $$\vp_x(T\bdot R)=\vp_x(T)+\vp_x(R)=k+(n-k)=n$$ for any  $x\in \supp(R^{[-1]}\bdot T^{[-1]}\bdot S)$. As a result, since $\h(S')\leq n$, we conclude that $$|S|-|S'|=|{S'}^{[-1]}\bdot S|\geq
\Summ{x\in G,\,\vp_x(S)\geq n}(\vp_x(S)-n)
\geq
|R^{[-1]}\bdot T^{[-1]}\bdot S|=|S|-|T|-|R|,$$ which implies the desired conclusion  $|R|\geq |S'|-|T|$, completing the proof.
\end{proof}

%
%
%

We are now ready to proceed with the proof of Theorem \ref{thm-partitoincor}.

\begin{proof}[Proof of Theorem \ref{thm-partitoincor}]
We begin by remarking that it suffices to prove Theorem \ref{thm-partitoincor} under the assumption
$\la \supp(S)\ra_*=G$. Indeed, suppose we know the theorem holds in this case. Then we can w.l.o.g. translate the terms of $S$ so that $0\in \supp(S)$ and $\la \supp(S)\ra=\la \supp(S)\ra_*$ and apply Theorem \ref{thm-partitoincor} using $\la \supp(S)\ra_*<G$ in place of $G$. Note if one of Items 1--4 holds, then one of Items 1--4 holds replacing $G$ by $\la \supp(S)\ra_*$, though it may not be the same item. If (ii) holds as result of the application of Theorem \ref{thm-partitoincor} using $\la \supp(S)\ra_*$, we are done, while if  $|\Sigma_n(S)|\geq |\Sum{i=1}{n}A_i|\geq |S'|-n+1$, then (i) follows. In the final case where $|\Sigma_n(S)|\geq |\Sum{i=1}{n}A_i|\geq |\la \supp(S)\ra_*|$, we conclude that $\Sigma_n(S)=\Sum{i=1}{n}A_i=\la \supp(S)\ra_*$, in which case (ii) is easily seen to hold using $H=K=\la \supp(S)\ra_*<G$ with $e_H=e_K=0$ and $\alpha=0$ (unless $\la \supp(S)\ra_*$ is trivial, in which case $\supp(S)=\{0\}$ and (i) holds). Thus we now assume $\la \supp(S)\ra_*=G$.

Let $H=\mathsf H(\Sigma_n(S))$, let $X\subseteq G/H$ be the subset of all $x\in G/H$ for which $x$ has multiplicity at least $n$ in $\phi_H(S)$, and let $Z=\phi_H^{-1}(X)\subseteq G$.
Apply Theorem \ref{thm-partition-thm} to $\Sigma_n(S)$ using $S'\mid S$ and let $\sA=A_1\bdot\ldots\bdot A_n$ be the resulting setpartition and $S''=\mathsf S(\mathscr A)$. Then $S''\mid S$ is a subsequence with $|S''|=|S'|$.  If $|\Sum{i=1}{n}A_i|\geq |S'|-n+1$, then (i) follows, as desired. Therefore assume Theorem \ref{thm-partition-thm}.2 holds.
Then, letting $N=|X|$ and $e_H=\Sum{i=1}{n}|A_i\setminus Z|$, it follows that $\Sigma_n(S)=\Sum{i=1}{n}A_i$ and  \ber \nn (|S'|-n+1)-(n-e_H-1)(|H|-1)+\rho&=& ((N-1)n+e_H+1)|H|\\ &\leq& |\Sum{i=1}{n}A_i|=|\Sigma_n(S)|\leq \min\{|G|-|H|,\,|S'|-n\},\label{gogag2}\eer  where $$\rho=N|H|n+e_H-|S'|\geq 0,$$ else the desired conclusion (i) holds.
In particular, $e_H\leq n-2$ and $H<G$ is proper and nontrivial. Thus the proof is complete when $|G|$ is prime, and we may proceed by induction on $|G|$. We also must have $N\geq 1$, else $e_H=|S'|$ follows, in which case  \eqref{gogag2} yields the contradiction  $|S'|-n\geq |\Sigma_n(S)|\geq (|S'|-n+1)|H|\geq |S'|-n+1$.
Thus $X$ is nonempty. Let \be\label{k-def-geq2}k=n-e_H\geq 2.\ee
By re-indexing the $A_i$, we can w.l.o.g. assume (in view of Theorem \ref{thm-partition-thm}.2) that \be\label{indexchoice}A_i+H=Z\quad\mbox{ for $i\in [1,k]$}\quad\und\quad |A_i\setminus Z|=1\mbox{ for $i\in [k+1,n]$}.\ee

\subsection*{Step A} $N=|X|=1$.

This step is the only place in the proof where the hypotheses in Items 1--4 will be used. We will show that any of these hypotheses, combined together with \eqref{gogag2} and Theorem \ref{cor1} applied to $nX$, forces $N=1$.

Assume by contradiction that  $N=|X|\geq 2$.
We have assumed  $\la \supp(S)\ra_*=G$, while $\la X\ra_*\leq G/H$ is nontrivial in view of $N=|X|\geq 2$, implying $H<\la Z\ra_*$. Consequently, in view of \eqref{gogag2}, and Lemma \ref{lem-partition-extra} applied to $S'\mid S$, we conclude that   \be \label{gob12}\la Z\ra_*=G\quad\und\quad\la X\ra_*=G/H.\ee
In view of \eqref{gogag2} and $H=\mathsf H(\Sigma_n(S))=\mathsf H(\Sum{i=1}{n}A_i)$, we have  \be\label{gob22}nX\neq G/H.\ee Since $e_H\leq n-2$ and $|S''|=|S'|$, we have \be\label{hawkeagel}|X||H|n=|Z|n\geq |S'|-e_H\geq |S'|-n+2.\ee In consequence, if $|nX|\geq |X|n$, then, since $X\subseteq \phi_H(A_i)$ for all $i\in [1,n]$ (per Theorem \ref{thm-partition-thm}.2), it follows that  $$|\Sum{i=1}{n}A_i|=|\Sum{i=1}{n}\phi_H(A_i)||H|\geq |nX||H|\geq |X||H|n>|S'|-n+1,$$ contrary to \eqref{gogag2}. Therefore we must instead have \be\label{gob33}|nX|<|X|n.\ee

If $n\geq \exp(G/H)+1$, then $n\geq 3$ as $H<G$ is proper, and then Theorem \ref{cor1} applied to $nX$ combined with  \eqref{gob12} and \eqref{gob22} implies that $|nX|\geq |X|n$, contrary to \eqref{gob33}.

If $n=\exp(G/H)>|H|$, then $n\geq 3$ as $H$ is nontrivial, and then Theorem \ref{cor1} applied to $nX$ combined with  \eqref{gob12}, \eqref{gob22} and \eqref{gob33} implies that \be\label{grapes}|G/H|-|K|=|nX|\geq |X+K|n-|K|\geq |X|n-|K|\ee
with one of the two possibilities listed in Theorem \ref{cor1}.1 holding, where $K=\mathsf H(nX)\leq G/H$.
If $|(X+K)\setminus X|\geq \frac12|K|$, then \eqref{grapes} implies that $|nX|\geq |X|n$, contrary to \eqref{gob33}. Therefore, we must have $|(X+K)\setminus X|\leq  \frac{|K|-1}{2}$, in which case Theorem \ref{PigeonHoleBound} implies that $2X$ is $K$-periodic.  In view of \eqref{indexchoice} and \eqref{k-def-geq2}, we have $\phi_H(A_1)+\phi_H(A_2)=2X$, which is $K$-periodic as just noted. Consequently,  if $\Sum{i=1}{n}\phi_H(A_i)\neq nX$, then $|\Sum{i=1}{n}\phi_H(A_i)|\geq |nX|+|K|\geq |X|n$, with the last inequality from \eqref{grapes}. In this case, \eqref{hawkeagel} yields $|\Sum{i=1}{n}A_i|=|\Sum{i=1}{n}\phi_H(A_i)||H|\geq |X||H|n> |S'|-n+1$, contradicting \eqref{gogag2}.
Therefore we instead conclude that
\be\label{returnper}\Sum{i=1}{n}\phi_H(A_i)= nX.\ee Since $\Sigma_n(S)=\Sum{i=1}{n}A_i$ with $\mathsf H(\Sigma_n(S))=H$ (in view of Theorem \ref{thm-partition-thm}.2), it follows that $\Sum{i=1}{n}\phi_H(A_i)= nX$ is aperiodic, meaning $K$ is trivial.
If $e_H=0$, then $|S'|\leq |X||H|n$, whence \eqref{grapes} and \eqref{returnper} imply $|\Sum{i=1}{n}A_i|=|\Sum{i=1}{n}\phi_H(A_i)||H|= |nX||H|\geq |X||H|n-|H|\geq |S'|-|H|\geq |S'|-n+1$, with the final inequality in view of $n=\exp(G/H)>|H|$. However this contradicts \eqref{gogag2}. Therefore we can instead assume $e_H\geq 1$. Thus, in view of \eqref{indexchoice}, there is some $z\in A_n$ such that $\phi_H(z)\notin X$.
However, examining both possibilities for the set $X$ given in Theorem \ref{cor1}.1 with $K$ trivial, we find that $(n-1)X+(X\cup \{y\})=G/H$ whenever $y\notin X$, where $n=\exp(G/H)\geq 3$. Thus, since $\phi_H(A_n)=X\cup \{\phi_H(z)\}$ with $\phi_H(z)\notin X$, we conclude that  $G/H=(n-1)X+(X\cup \{\phi_H(z)\})\subseteq \Sum{i=1}{n}\phi_H(A_i)$, implying $\Sum{i=1}{n}A_i=G$.
However this once again contradicts \eqref{gogag2}.

If $n\geq \exp(G/H)$ and $G/H\cong C_2\times C_{\exp(G/H)}$, then either $n\geq 3$, in which case  Theorem \ref{cor1} applied to $nX$ combined with  \eqref{gob12} and \eqref{gob22}  implies that $|nX|\geq |X|n$, contrary to \eqref{gob33}, or else $n=2=\exp(G/H)$, in which case $G/H\cong C_2^2$. In the latter case, \eqref{gob12} forces $|X|\geq 3$, in which case Theorem \ref{PigeonHoleBound} implies $nX=2X=G/H$, contrary to \eqref{gob22}.

If $n\geq \exp(G/H)-1$ and $G/H$ is cyclic, then $|G/H|=\exp(G/H)$. If $n=1$, then (i) trivially holds, so we may assume $n\geq 2$. If $n=2$, then $|G/H|\leq 3$. In such case, Theorem \ref{PigeonHoleBound} applied to $X$ implies that $2X=nX=G/H$, contradicting \eqref{gob22}. Therefore we can instead assume $n\geq 3$, and then Theorem \ref{cor1} applied to $nX\subseteq G/H$ combined with  \eqref{gob12}, \eqref{gob22} and $G/H$ cyclic implies that $|nX|\geq |X|n$, contradicting \eqref{gob33}. As this exhausts all possibilities for the hypotheses given in Items 1--4, Step A is complete.

 \bigskip

In view of Claim A, \eqref{gogag2} now yields \be\label{wrench}(n-k+1)|H|=(e_H+1)|H|\leq |\Sum{i=1}{n}A_i|=|\Sigma_n(S)|\leq\min\{|G|-|H|,\,|S'|-n\},\ee in turn implying $e_H\leq \min\{|G/H|-2,\,\frac{|S'|-n}{|H|}-1\}$.
Indeed, it is easily observed that both upper bounds for $e_K$ and $e_H$ in (ii)(b) follow from the respective lower bound for $|\Sigma_n(S)|$ combined with \eqref{gogag2}.
Note \eqref{wrench} implies that \be\label{nail} |S'|\geq (e_H+1)|H|+n=(n-k+1)|H|+n.\ee In view of Step A, there is some $\alpha\in G$ such that $Z=\alpha+H$ and  $e_H$ is the number of terms of $S$ lying outside the coset $\alpha+H$.  Since $H<G$ is proper and $\la \supp(S)\ra_*=G$, we must have $e_H\geq 1$, and thus $2\leq k\leq n-1$, implying $n\geq 3$.

By translating all terms of $S$ appropriately, we can w.l.o.g. assume  $\alpha=0$ and $Z=H$.
In view of \eqref{indexchoice} and  Theorem \ref{thm-partition-thm}.2, we have    $A_i\cap H\neq \emptyset$ for all $i\in [1,n]$,  \ $\Sigma_n(A)=\Sum{i=1}{n}A_i$,  \ $\supp(\mathsf S(\sA)^{[-1]}\bdot S)\subseteq H$,  \be\label{index-N=1setup}A_i\subseteq H\quad\mbox{ for all $i\in [1,k]$,} \quad\und\quad A_i\setminus H=\{z_i\}\quad\mbox{ for all $i\in [k+1,n]$},\ee
for some $z_i\in G\setminus H$.


Let $S_H\mid S$ be the subsequence of $S$ consisting of all terms from $H$ and let $S'_H\mid S''$ be the subsequence of $S''$ consisting of all terms from $H$. Then $|S_H|=|S|-e_H$ and $|S'_H|=|S'|-e_H$ (recall $|S'|=|S''|$). Moreover, $\h(S'_H)\leq n\leq |S'_H|$ since $A_i\cap H\neq\emptyset $ for all $i\in [1,n]$ and $S'_H\mid S''=\mathsf S(\A)$.
Let $T\mid S_H$ be a maximal length subsequence such that $\h(T)\leq k\leq |T|$ and $|T|\leq |S'_H|-(n-k)$. By translating all terms of $S$ by an appropriate element from $H$, we can w.l.o.g. assume $0\in \supp(S_H)$. Let \be\label{defH'}G'=\la \supp(S_H)\ra\leq H \quad \und\quad H'=\mathsf H(\Sigma_k(S_H))\leq G'.\ee
In view of the definition of $T$ and \eqref{nail}, we have
\be\label{quartz}|T|\geq \frac{k}{n}|S'_H|=\frac{k}{n}(|S'|-e_H)\geq \frac{k}{n}\Big((n-k+1)|H|+k\Big).\ee

\subsection*{Step B} If there exists a setpartition $\mathscr B=B_1\bdot\ldots\bdot B_k\in \sP(H)$ with $\mathsf S(\mathscr B)\mid S_H$, \ $|\mathsf S(\mathscr B)|=|T|$ and $\Sum{i=1}{k}B_i=G'$, then (ii) holds taking $K=G'$.

Recall that $S_H\mid S$ is the subsequence of all terms from $H$ and that we have assumed $0\in \supp(S_H)$ with $\la \supp(S_H)\ra=G'\leq H$. This means that a term of $S$ is from $H$ if and only if it is from $G'\leq H$. Thus, setting $K=G'$, we find that $e_H=e_K$ is also the number of terms of $S$ lying outside the subgroup $G'$.

Let $R=\mathsf S(\mathscr B)$. Since $\mathscr B$ is a $k$-setpartition, our hypotheses give  $\h(R)\leq k\leq |R|=|T|\leq |S'_H|-(n-k)$, with the latter inequality in view of the definition of $T$. In view of Lemma \ref{lem-techPartitioning} (applied taking $T$ to be $R$, taking $S$ to be $S_H$, and taking $S'$ to be $S'_H$), it follows that there is a subsequence $R'\mid R^{[-1]}\bdot S_H$ such that $|R|+|R'|=|S'_H|=|S'|-e_H$ and $\h(R')\leq n-k\leq |R'|$. The latter is equivalent  to there existing a setpartition $\mathscr B'=B_{k+1}\bdot\ldots\bdot B_n$ with $\mathsf S(\mathscr B')=R'$. Define $\mathscr C=C_1\bdot\ldots\bdot C_n=B_1\bdot\ldots\bdot B_k\bdot (B_{k+1}\cup \{z_{k+1}\})\bdot\ldots \bdot (B_{n}\cup \{z_{n}\})\in \sP(G)$. Then $\mathsf S(\mathscr C)\mid S$ with $|\mathsf S(\mathscr C)|=|R|+|R'|+e_H=|S'|$.

The hypothesis of Step B ensures that (ii)(d) holds for $\mathscr C$.  Since a term of $S$ is from $H$ if and only if it is from $G'$, and since $\mathsf S(\mathscr C)^{[-1]}\bdot S\mid S_H$ (as the $z_i$ are precisely those terms of $S$ lying outside the subgroup $H$), it follows that $\supp(\mathsf S(\mathscr C)^{[-1]}\bdot S)\subseteq G'=K$ and that (ii)(c) holds for $\mathscr C$.  Since $K=G'\leq H$, we also have $|\Sigma_n(S)|\geq (e_H+1)|H|=(e_K+1)|H|\geq (e_K+1)|K|$, whence (ii)(b) holds in view of \eqref{gogag2}.
 It remains to show $\Sum{i=1}{n}C_i=\Sigma_n(S)$. To this end, since $\Sum{i=1}{n}A_i=\Sigma_n(S)$ (in view of Theorem \ref{thm-partition-thm}.2 holding for $\A$), it suffices to show $\Sum{i=1}{n}A_i\subseteq \Sum{i=1}{n}C_i$ to complete the step (as the reverse inclusion $\Sum{i=1}{n}C_i\subseteq \Sigma_n(S)=\Sum{i=1}{n}A_i$ is trivial). Let $x=a_1+\ldots+a_n\in \Sum{i=1}{n}A_i$ with $a_i\in A_i$ for $i\in [1,n]$ be arbitrary.  Let $I\subseteq [1,n]$ be all indices $i\in [1,n]$ with $a_i\notin H$. Then $a_i=z_i$ for all $i\in I$, while $a_i\in K$ for all $i\in [1,n]\setminus I$. Hence $x\in \Summ{i\in I}z_i+K$. However, since $\Sum{i=1}{k}C_i=\Sum{i=1}{k}B_i=K$ with all  $z_i\in C_i$ for $i\in [k+1,n]$, it follows that $x\in \Summ{i\in I}z_i+K\subseteq \Sum{i=1}{n}C_i$. Consequently, since $x\in \Sum{i=1}{n}A_i$ was arbitrary, we conclude that $\Sum{i=1}{n}A_i\subseteq \Sum{i=1}{n}C_i$. Thus (ii)(a) also holds for $\mathscr C$, completing Step B.

\subsection*{Step C}  $5\leq k\leq n-2$

If $\Sum{i=1}{k}A_i=H$, then (ii) follows taking $K=H$ and $\alpha=0$, and the proof is complete. Therefore, we may instead assume
$\Sum{i=1}{k}A_i\neq  H.$
%
%
As a result, if $e_H=1$, then $\Sum{i=1}{n}A_i\subseteq \{0,z_n\}+H$ with $\Big(\Sum{i=1}{n}A_i\Big)\cap (z_n+H)=z_n+\Sum{i=1}{n-1}A_i=z_n+\Sum{i=1}{k}A_i\neq z_n+H$, contradicting that $H=\mathsf H(\Sigma_n(S))=\mathsf H(\Sum{i=1}{n}A_i)$. Therefore, we instead conclude that $e_H\geq 2$, and thus \be\label{supper} 2\leq k=n-e_H\leq n-2.\ee In particular, $n\geq 4$.


Since $\h(T)\leq k\leq |T|$, there is a setpartition  $\mathscr A'=A'_1\bdot\ldots\bdot A'_k$ with $\mathsf S(\mathscr A')=T$, and we can w.l.o.g assume $|A'_1|\geq |A'_2|\geq \ldots\geq |A'_k|$. Then, since $k\geq 2$, it follows by the Pigeonhole Principle that  \be\label{twinden}|A'_1|+|A'_2|\geq \frac{2}{k}|T|.\ee
If  $|A'_1|+|A'_2|\geq |H|+1$,  then Theorem \ref{PigeonHoleBound} implies that $A'_1+A'_2=H$, forcing $H=G'$, whence Step B completes the proof in view of $k\geq 2$. Therefore we may instead assume \be\label{horsecart} |A'_1|+|A'_2|\leq |H|.\ee


The definition of $\rho$ combined with Step A gives $\rho=|H|n-|S'_H|$, and thus the first inequality in \eqref{quartz} yields $|T|\geq k|H|-\frac{k}{n}\rho$. Combined with \eqref{horsecart} and \eqref{twinden}, we find that $|H|\geq |A'_1|+|A'_2|\geq \frac2k|T|\geq 2|H|-\frac{2}{n}\rho$, implying $\rho\geq\frac12n|H|$. On the other hand, \eqref{gogag2} ensures that $\rho<(k-1)(|H|-1)$. Thus \be\label{trollack}\frac12n|H|\leq \rho<(k-1)(|H|-1).\ee Combining \eqref{supper} and \eqref{trollack} gives the desired bounds $5\leq k\leq n-2$, completing Step C.

\subsection*{Step D} If there is a setpartition $\mathscr B=B_1\bdot\ldots\bdot B_k\in \sP(H)$ with $\mathsf S(\mathscr B)\mid S_H$, $|\mathsf S(\mathscr B)|=|T|$ and $|\Sum{i=1}{k}B_i|\geq \min\{|G'|,\,|T|-k+1\}$, then (ii) holds taking $K=G'$.


In view of \eqref{quartz}, we have \be\label{tryon}|T|-k+1\geq \frac{k(n-k+1)|H|-k(n-k)}{n}+1.\ee The right hand side of \eqref{tryon}  is quadratic in $k$ with negative lead coefficient (since $H$ is nontrivial), thus minimized at a boundary value for $k$. In view of Step C, we have $k\in [3,n-2]$ with $n\geq 6$. Thus the bound in \eqref{tryon} is minimized for $k=3$, yielding  $|T|-k+1>\frac{3(n-2)}{n}|H|-2\geq 2|H|-2\geq |H|\geq |G'|$. Consequently, since $0\in \supp(S_H)$ with $\la \supp(S_H)\ra=G'$, it follows that the hypotheses of Step D yield $\Sum{i=1}{k}B_i=G'$, and now Step B yields (ii) taking $K=G'$, completing Step D.


\subsection*{Step E} $k> |H/H'|+2$.

Recall that  $H'=\mathsf H(\Sigma_k(S_H))\leq G'\leq H$ as defined in \eqref{defH'}.
Apply Theorem \ref{thm-partition-thm} to  $T\mid S_H$ and $\Sigma_k(S_H)$. Then, in view of Step D, we can assume Theorem \ref{thm-partition-thm}.2 holds. Let $\mathscr B=B_1\bdot\ldots\bdot B_k$ be the resulting setpartition. Theorem \ref{thm-partition-thm}.2 yields $$|H|-|H'|\geq |\Sigma_k(S_H)|= |\Sum{i=1}{k}B_i|\geq |T|-(k-1)|H'|,$$ with the upper bound holding in view of Step D and $G'\leq H$. Thus \be\label{wallyjungle}k\geq \frac{|T|-|H|}{|H'|}+2.\ee  In view of \eqref{quartz} and Step C, we have $|T|> \frac{k(n-k+1)}{n}|H|\geq \frac{3(n-2)}{n}|H|>2|H|$, which combined with \eqref{wallyjungle} yields the desired bound for $k$, completing Step E.

\bigskip

Since $H<G$ is proper and $\h(T)\leq k\leq |T|\leq |S'_H|-(n-k)$ (in view of the definition of $T$), it follows from Step E and \eqref{defH'} that we can apply the induction hypothesis to $T\mid S_H$ and $\Sigma_k(S_H)$. Let $\mathscr B=B_1\bdot\ldots\bdot B_k\in \sP(G')$ be the resulting setpartition and let $R=\mathsf S(\mathscr B)\mid S_H$. Then $|R|=|\mathsf S(\mathscr B)|=|T|$ and, in view of Step D, we can assume Theorem \ref{thm-partitoincor}(ii) holds for $\Sigma_k(S_H)$ with nontrivial subgroup $K\leq H'< G'\leq H$ and $\alpha'\in G'\leq H$.
Let $S_K\mid S$ be the subsequence of all terms of $S$ from $\alpha'+K$. By translating all terms of $S$ appropriately, we can w.l.o.g. assume $\alpha'=0$ and $0\in \supp(S_K)$.
 Let \be\label{batgopher}e'_K\leq \min\{|G'/K|-2,\frac{|T|-k}{|K|}-1\}\ee be the number of terms of $S_H$ lying outside the subgroup $K$, and let $k'=k-e'_K=n-e_H-e'_K=n-e_K$, where $$e_K=e_H+e'_K$$ is the number of terms of $S$ lying outside the subgroup $K$.

 In view of Lemma \ref{lem-techPartitioning} (applied taking $T$ to be $R$, taking $S$ to be $S_H$, and taking $S'$ to be $S'_H$), it follows that there is a subsequence $T'\mid  R^{[-1]}\bdot S_H$ such that $|R|+|T'|=|T|+|T'|=|S'_H|=|S'|-e_H$ and $\h(T')\leq n-k\leq |T'|$. The latter is equivalent  to there existing a setpartition $\mathscr B'=B_{k+1}\bdot\ldots\bdot B_n\in \sP(G')$ with $\mathsf S(\mathscr B')=T'$.  Then $$\mathscr C=C_1\bdot\ldots\bdot C_n=B_1\bdot\ldots\bdot B_k\bdot (B_{k+1}\cup \{z_{k+1}\})\bdot\ldots
\bdot (B_n\cup \{z_n\})\in \mathsf S(G)$$ is a setpartition with $\mathsf S(\mathscr C)\mid S$ and $|\mathsf S(\mathscr C)|=|T|+|T'|+e_H=|S'|$. We will show that (ii) holds taking $\sA$ to be $\mathscr C$ and taking $K$ as defined above.

Since (ii)(d) holds for $\mathscr B=B_1\bdot\ldots\bdot B_k$, we have $\Sum{i=1}{k'}C_i=\Sum{i=1}{k'}B_i=K$, whence (ii)(d) holds for $\mathscr C$.
 Since (ii)(c) and (ii)(a) hold for $\mathscr B=B_1\bdot\ldots\bdot B_k$, it follows that $K\cap C_i=K\cap B_i\neq \emptyset$ for all $i\in [1,k]$, that $C_i=B_i\subseteq K$ for all $i\in [1,k']$, that $C_i\setminus K=B_i\setminus K=\{z_i\}$ for all $i\in [k'+1,k]$, for some $z_i\in G'\setminus K$, and that $$\supp(R^{[-1]}\bdot S_H)=\supp(\mathscr S(B_1\bdot\ldots\bdot B_k)^{[-1]}\bdot S_H)\subseteq K.$$ Thus $B_i\subseteq K$ for all $i\in [k+1,n]$ and $\supp(\mathsf S(\mathscr C)^{[-1]}\bdot S)\subseteq K$, in which case (ii)(c) holds for $\mathscr C$. Since (ii)(b) holds for $\Sigma_k(S_H)$, we have $(e'_K+1)|K|\leq |\Sigma_k(S_H)|\leq |G'|\leq |H|$. Thus \eqref{wrench} implies \ber\nn |\Sigma_n(S)|&\geq& (e_H+1)|H|=(e_K-e'_K+1)|H|\geq (e_K-e'_K+1)(e'_K+1)|K|\\\nn &=&(e_K+e'_Ke_K-{e'_K}^2+1)|K|\geq (e_K+1)|K|,\eer in which case (ii)(b) holds for $\mathscr C$ in view of \eqref{gogag2}.
 It remains to show $\Sum{i=1}{n}C_i=\Sigma_n(S)$. To this end, since $\Sum{i=1}{n}A_i=\Sigma_n(S)$ (in view of Theorem \ref{thm-partition-thm}.2 holding for $\A$), it suffices to show $\Sum{i=1}{n}A_i\subseteq \Sum{i=1}{n}C_i$ to complete the proof (as the reverse inclusion $\Sum{i=1}{n}C_i\subseteq \Sigma_n(S)=\Sum{i=1}{n}A_i$ is trivial). Let $x=a_1+\ldots+a_n\in \Sum{i=1}{n}A_i$ with $a_i\in A_i$ for $i\in [1,n]$ be arbitrary.
 Since (ii)(c) holds for $\mathscr B=B_1\bdot\ldots\bdot B_k$, it follows that, for every $i\in [k'+1,k]$,  we have $C_i\setminus K=B_i\setminus K=\{z_i\}$ for some $z_i\in G'\setminus K$. Moreover the terms $z_{k'+1},z_{k'+2},\ldots,z_n$ are precisely those terms of $S$ lying outside the subgroup $K$.
Thus  $x\in \Big(\Sigma(z_{k'+1}\bdot\ldots\bdot z_n)\cup \{0\}\Big)+K$.
However, since $\Sum{i=1}{k'}B_i=\Sum{i=1}{k'}C_i=K$ (in view of (ii)(d) holding for $\mathscr B=B_1\bdot\ldots\bdot B_k$), since each $z_i\in C_i$ for $i\in [k'+1,n]$,  and since $K\cap C_i\neq \emptyset $ for all $i\in [1,n]$, it follows that $x\in \Big(\Sigma(z_{k'+1}\bdot\ldots\bdot z_n)\cup \{0\}\Big)+K\subseteq \Sum{i=1}{n}C_i$. Consequently, since $x\in \Sum{i=1}{n}A_i$ was arbitrary, we conclude that $\Sum{i=1}{n}A_i\subseteq \Sum{i=1}{n}C_i$, and now (ii)(a) holds for $\mathscr C$, completing the proof.
\end{proof}

We conclude with the following variation on Theorem \ref{thm-partitoincor}. As was the case for Theorem \ref{thm-partitoincor}, any of the following conditions combined with $H<G$ being proper and nontrivial ensures that one of Items 1--3 holds in Theorem \ref{thm-partitoincor-G}, and thus they can be substituted for Items  1--3 in Theorem \ref{thm-partitoincor-G}.

\begin{itemize}
\item[1.] $n\geq \exp(G)$, or
\item[2.] $n\geq \exp(G)-1$ and $G\cong K\times C_{\exp(G)}$ with either $\exp(G)$ or $|K|$ prime, or
\item[3.] $n\geq \frac1p |G|-1$ and $G$ is cyclic, where $p$ is the smallest prime divisor of $|G|$, or
\item[4.] $n\geq 1$ and either $\exp(G)\leq 3$ or $|G|<10$.

\end{itemize}

While the bounds given in Items 1--3 of Theorem \ref{thm-partitoincor-G} below are not tight, the worse-case scenario ones given in Items 1--4 above are, as can be seen by Examples B.1--B.3 in \cite{IttI}. It seems to be a more challenging problem to find optimal bounds in Theorem \ref{thm-partitoincor-G} for $n$ in terms of $G/H$, rather than $G$, particularly when $G/H$ is not close to cyclic.

\begin{theorem}
\label{thm-partitoincor-G}
Let $G$ be a finite abelian group, let $n\geq 1$, let $S\in \Fc(G)$ be a sequence of terms from $G$ with  $H=\mathsf H(\Sigma_n(S))$, and let   $S'\mid S$ be a subsequence with $\mathsf h(S')\leq n\leq |S'|$ and $|S'|\geq n+|G|-1$. Suppose either $H$ is trivial, equal to $G$, or that one of the following holds:
\begin{itemize}
\item[1.]  $n\geq \exp(G/H)$, or
\item[2.] $n\geq \exp(G/H)-1$ and either $G/H$ is cyclic or $\exp(G/H)$ is prime, or
\item[3.] $n\geq 1$ and either $\exp(G/H)\leq 3$ or $G/H\cong C_4$.
\end{itemize}
Then there is a setpartition $\mathscr A=A_1\bdot\ldots\bdot A_n\in \sP(G)$ with $\mathsf S(\mathscr A)\mid S$ and $|\mathsf S(\mathscr A)|=|S'|$ such that either $\Sigma_n(S)=\Sum{i=1}{n}A_i=G$ or else Theorem \ref{thm-partitoincor}(ii) holds.
\end{theorem}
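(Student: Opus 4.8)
The plan is to run the proof of Theorem \ref{thm-partitoincor} essentially verbatim, exploiting the extra hypothesis $|S'|\geq n+|G|-1$ in two complementary ways. First, this hypothesis forces $\min\{|G|,\,|S'|-n+1\}=|G|$, so conclusion (i) of Theorem \ref{thm-partitoincor} now reads $|\Sum{i=1}{n}A_i|\geq |G|$, i.e. $\Sum{i=1}{n}A_i=G$; since $\Sum{i=1}{n}A_i\subseteq \Sigma_n(S)\subseteq G$ this gives $\Sigma_n(S)=\Sum{i=1}{n}A_i=G$, which is precisely the first alternative of the present theorem, while conclusion (ii) is carried over unchanged. As in Theorem \ref{thm-partitoincor} I would induct on $|G|$ and first reduce to $\la \supp(S)\ra_*=G$: when $\la \supp(S)\ra_*=G_0<G$ the equality $\Sigma_n(S)=G$ is impossible (as $\Sigma_n(S)$ lies in a single $G_0$-coset), while applying the induction hypothesis to $G_0$ — which inherits the size hypothesis since $|S'|\geq n+|G|-1\geq n+|G_0|-1$, and inherits one of Items 1--3 since $\exp(G_0/H)\leq \exp(G/H)$ — returns either Theorem \ref{thm-partitoincor}(ii) directly or $\Sigma_n(S)=G_0$, the latter giving (ii) with $H=K=G_0<G$.

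The crucial observation, already recorded in the original argument, is that the hypotheses on $n$ enter only in Step A, to force $N=|X|=1$; Steps B--E and the concluding inductive step are insensitive to the precise bound on $n$. In particular, that concluding step applies to the sub-problem over $G'\leq H$ with parameter $k$, and Step E supplies $k>|H/H'|+2\geq \exp(G'/H')+1$, so the hypothesis of Item 1 of the already-proven Theorem \ref{thm-partitoincor} holds there, while Step D supplies $|T|-k+1>|G'|$, making its conclusion (i) read $\Sum{i=1}{k}B_i=G'$, exactly as Step D requires. Hence I need only re-establish Step A under the weaker present hypotheses. So suppose $N\geq 2$; then \eqref{gob12} gives $\la X\ra_*=G/H$, and the size hypothesis upgrades \eqref{hawkeagel} to $n|X||H|\geq |S'|-n+2\geq |G|+1$, i.e. $n|X|>|G/H|$. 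This is precisely the hypothesis $n|A|>|G|$ needed to apply Theorem \ref{cor2} (with $G/H$ and $X$ in place of $G$ and $A$) in place of Theorem \ref{cor1}.

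I would then run the case analysis on Items 1--3, the goal now being to force $nX=G/H$ — which yields $\Sum{i=1}{n}A_i=G$, hence the desired first conclusion $\Sigma_n(S)=G$, rather than a contradiction. Under Item 1 ($n\geq \exp(G/H)$), Theorem \ref{cor2}.1 gives $nX=G/H$ outright. Under Item 2 with $n=\exp(G/H)-1$, Theorem \ref{cor2}.2 would force $\exp(G/H)$ composite and $G/H$ non-cyclic, contradicting that $G/H$ is cyclic or $\exp(G/H)$ is prime, so again $nX=G/H$. Under Item 3 the exponent is tiny: the values $n=1$ are vacuous since $n|X|>|G/H|$ would force $|X|>|G/H|$; for $\exp(G/H)\in\{2,3\}$ and for $G/H\cong C_4$ the remaining values of $n$ either fall to Theorem \ref{cor2}.1/.2 (the composite/non-cyclic structure being excluded by $3$ prime or $C_4$ cyclic), or, in the single leftover case $G/H\cong C_4$ with $n=2$, are dispatched directly: $n|X|>4$ gives $|X|\geq 3$, whence the Pigeonhole Bound (Theorem \ref{PigeonHoleBound}) yields $2X=G/H$. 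In every case $nX=G/H$, so we may assume $N=1$, after which the rest of the proof of Theorem \ref{thm-partitoincor} produces the setpartition witnessing (ii).

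The main obstacle I expect is not Step A proper but the bookkeeping required to certify that the unchanged portion really does transfer: one must confirm that the initial reduction and, especially, the concluding inductive step still have all their quantitative inputs — the size bound $|T|\geq k+|G'|-1$ from Step D and $k>\exp(G'/H')+1$ from Step E — so that the fully-proven Theorem \ref{thm-partitoincor} can be invoked on the sub-problem over $G'$ rather than re-derived. The only genuinely new computation is the low-exponent endgame of Item 3, where $G/H\cong C_4$ with $n=2$ lies just outside the range of Theorem \ref{cor2} and must be handled by the Pigeonhole Bound.
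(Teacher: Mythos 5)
Your proposal is correct and follows essentially the same route as the paper: the size hypothesis $|S'|\geq n+|G|-1$ is used exactly as the paper uses it, namely to upgrade the counting bound to $n|X|>|G/H|$ so that Step A can invoke Theorem \ref{cor2} in place of Theorem \ref{cor1}, with Steps B--E carrying over unchanged and Step D's bound $|T|-k+1>|G'|$ licensing the final application to the subproblem over $G'$. The only differences are organizational rather than substantive: the paper dispatches $n=1,2$ upfront via the Pigeonhole Bound (so that Item 3 reduces to Items 1--2 once $n\geq 3$) instead of inside Step A as you do, it applies the inductive hypothesis of the theorem itself to the subproblem where you invoke the already-proven Theorem \ref{thm-partitoincor}, and it also records the (nearly vacuous) edge case where $\la \supp(S)\ra_*$ is trivial, which under the size hypothesis forces $|G|=1$.
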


\begin{proof}
The proof is a minor variation on that of Theorem \ref{thm-partitoincor} combined with the hypothesis $|S'|\geq |G|+n-1$, which ensures $|S'|-n+1\geq |G|$. We only highlight the few differences. First assume $\la \supp(S)\ra*=G$ and
let all notation be is in the proof of Theorem \ref{thm-partitoincor}, including $H$, $X$, $\mathscr A=A_1\bdot\ldots\bdot A_n$, $N$, and $e_H$.
If $n=1$, then the hypotheses  $\mathsf h(S')\leq n$ and $|S'|\geq n+|G|-1=|G|$ ensure that $\Sigma_n(S')=\supp(S')=G$, as desired. If $n=2$, then $|S'|\geq n+|G|-1=|G|+1$, in which case Theorem \ref{PigeonHoleBound} implies $A_1+A_2=G$, and thus $\Sigma_2(S)=G$, as desired. Therefore we can assume $n\geq 3$.
Since $N|H|n+e_H\geq |S'|\geq |G|+n-1$ and $e_H<n-1$,  it follows that $|X|n=Nn>|G/H|$.
 Thus in Step A we may use Theorem \ref{cor2} instead of Theorem \ref{cor1} applied to $nX$, in which case the hypotheses in Item 1 or Item 2 are enough to secure the contradiction $nX=G/H$. Moreover, the hypothesis in Item 3 combined with $n\geq 3$ ensures that either Item 1 or 2 holds. This allows us to conclude $|X|=N=1$. The rest of the proof is now identical to that of Theorem \ref{thm-partitoincor}. Note that $|T|\geq |H|+k-1\geq |G'|+k-1$ is shown in Step D, allowing us to apply the induction hypothesis to $T$ after Step E. This shows the theorem to hold when $\la \supp(S)\ra_*=G$. When $\la \supp(S)\ra_*<G$, then we can translate the terms of $S$ so that $0\in \supp(S)$ and apply Theorem \ref{thm-partitoincor-G} using $\la\supp(S)\ra=\la \supp(S)\ra_*$ instead of $G$. If Theorem \ref{thm-partitoincor}(ii) holds, we are done, while if $\Sigma_n(S)=\Sum{i=1}{n}A_i=\la \supp(S)\ra_*$, then Theorem \ref{thm-partitoincor}(ii) is easily seen to hold with $K=H=\la \supp(S)\ra_*$ and $e_H=e_K=0$, unless $\la \supp(S)\ra_*$ is trivial. However, if $\la \supp(S)\ra_*$ is trivial, then $\supp(S)=\{0\}$ and $\mathsf h(S')=|S'|=n$. Hence  $n=|S|'\geq n+|G|-1$ implies $|G|$ is trivial, and now $\Sum{i=1}{n}A_i=G=\{0\}$ follows, as desired. Thus the theorem follows in the case $\la\supp(S)\ra_*<G$ as well.
\end{proof}

We remark that it would be interesting to know whether it is always possible to take $K=H$ in Theorem \ref{thm-partitoincor}. Related to whether this is true or not is the question of whether there are examples of cardinality two subsets $A_1,\ldots,A_n\subseteq G$ such that $A_1+\ldots+A_n$ is aperiodic and there does not exist any $x\in A_1+\ldots+A_n$ with $\mathsf r_{A_1+\ldots+A_n}(x)=1$, where $\mathsf r_{A_1+\ldots+A_n}(x)$ denotes the number of tuples $(a_1,\ldots,a_n)\in A_1\times\ldots\times A_n$ with $a_1+\ldots+a_n=x$.

\end{document}